\newtheorem{theorem}{Theorem}[section]
\newtheorem{lemma}[theorem]{Lemma}
\numberwithin{equation}{section}
\theoremstyle{remark}
\newtheorem*{remark}{Remark}
\lstdefinestyle{CStyle}{
    basicstyle=\footnotesize,
    breakatwhitespace=false,         
    breaklines=true,                 
    captionpos=b,                    
    keepspaces=true,                 
    numbers=left,                    
    numbersep=5pt,                  
    showspaces=false,                
    showstringspaces=false,
    showtabs=false,                  
    tabsize=2,
    language=C
}
\title[Explicit zero density estimate near unity]{Explicit zero density estimate near unity}
\author[C. Bellotti]{Chiara  Bellotti}
\address{School of Science\\
The University of New South Wales, Canberra, Australia}
\email{c.bellotti@adfa.edu.au}
\keywords{Riemann zeta function, zero-density estimates, explicit results}
\subjclass[2020]{Primary 11M06, 11M26. Secondary 11Y35}
\begin{document}
\maketitle
\begin{abstract}
We will provide the first explicit zero-density estimate for $\zeta$ of the form $N(\sigma,T)\le \mathcal{C}T^{B(1-\sigma)^{3/2}}(\log T)^C$. In particular, we improve $C$ to $10393/900=11.547\dots.$ 
\end{abstract}
\section{Introduction}
Let $\zeta(s)$ be the Riemann zeta function and $\rho=\beta+i\gamma$ a non-trivial zero of $\zeta$, with $0<\operatorname{Re}(\rho)<1$. Given $1/2<\sigma<1$, $T>0$, we define the quantity$$
N(\sigma, T)=\#\{\rho=\beta+i \gamma: \zeta(\rho)=0,0<\gamma<T,\  \sigma<\beta<1\},
$$
which counts the number of non-trivial zeros of $\zeta$ with real part greater than the fixed value $\sigma$. Finding upper bounds for $N(\sigma,T)$, commonly known as zero-density estimates, is a problem that has always attracted much interest in Number Theory. If the Riemann Hypothesis is true, then we have $N(\sigma,T)=0$ for every $1/2<\sigma<1$, since all the non-trivial zeros of $\zeta$ would lie on the half-line $\sigma=1/2$. In 2021, Platt and Trudgian \cite{platt_riemann_2021} verified that the Riemann Hypothesis is true up to height $3\cdot 10^{12}$ and, consequently, $N(\sigma,T)=0$ for every $1/2<\sigma<1$, $T\le 3\cdot 10^{12}$. This allow us to restrict the range of values of $T$ for which we aim to estimate $N(\sigma,T)$ to $T>3\cdot 10^{12}$.\\
Different methods have been used by several mathematicians throughout the years, depending on the range for $\sigma$ inside the right half of the critical strip in which we are working. In 1937 Ingham \cite{ingham1937difference} proved that, assuming that $\zeta\left(\frac{1}{2}+i t\right)\ll t^{c+\epsilon}$, one has $
N(\sigma, T)\ll T^{(2+4 c)(1-\sigma)}(\log T)^5
$. In particular, the Lindelöf Hypothesis $\zeta\left(\frac{1}{2}+i t\right)\ll t^\epsilon$ implies  $N(\sigma, T)\ll T^{2(1-\sigma)+\epsilon}$, known as the Density Hypothesis. Ingham's type estimate has been made explicit by Ramaré \cite{ramare2016explicit} and improved in 2018 by Kadiri, Lumley and Ng \cite{KADIRI201822}. Ingham's method is particular powerful when $\sigma$ is close to the half-line, as the upper bound for $\zeta(\frac{1}{2}+it)$ is involved.\\
When $\sigma$ is really close to $1$ and $T$ sufficiently large, estimates of the type 
\begin{equation}\label{ivicest}
    N(\sigma,T)\ll T^{B(1-\sigma)^{3/2}}(\log T)^C
\end{equation}become sharper. Indeed, for $\sigma$ sufficiently close to $1$, and hence $T$ sufficiently large, Richert's bound \cite{richert_zur_1967} $|\zeta(\sigma+it)|\le A |t|^{B(1-\sigma)^{3/2}}(\log |t|)^{2/3}$ is the sharpest known upper bound for $\zeta$. However, contrary to Ingham's type zero-density estimate, the estimate \eqref{ivicest} has never been made explicit. The best non-explicit zero-density estimate of this form is due to Heath-Brown \cite{heathbrown_new_2017} (actually, this result has been recently slightly improved by Trudgian and Yang \cite{trudgian2023optimal}, as they found new optimal exponent pairs; see also \cite{pintz2023remark} for more recent results), in which he used a slightly different bound for $\zeta$, i.e. $\zeta(\sigma+it)\ll t^{B(1-\sigma)^{3/2}+\epsilon}$, where the exponent $B$ turns out to be much smaller ($B<\frac{1}{2}$) than the currently best-known value for $B$ in Richert's bound, that is $B=4.43795$ \cite{bellotti2023explicit}. However, differently from Richert's bound that is fully explicit, the bound used by Heath-Brown cannot be used to get an explicit version for the zero-density estimate of the form \eqref{ivicest}, since the factor $t^{\epsilon}$ cannot be made explicit (see \cite{Ivic2003TheRZ, montgomery_topics_1971} for previous non-explicit results).\medskip\\

The aim of this paper is to provide the first explicit zero-density estimate for $\zeta$ of the form \eqref{ivicest}. Combining Ivić's zero-detection method \cite{Ivic2003TheRZ} with Richert's bound and an explicit zero-free region for $\zeta$, we will prove the following result.
\begin{theorem}\label{theorem1general}
For every $\sigma\in[0.98,1]$ and $T\ge 3$, the following zero-density estimate holds uniformly:
\begin{equation*}
    \begin{aligned}
        &N(\sigma,T)\\&\le (\mathcal{C}_1T^{57.8875(1-\sigma)^{3/2}}(\log T)^{\frac{19703}{1800}}+\mathcal{C}_2T^{33.08(1-\sigma)^{3/2}}(\log T)^{\frac{503}{45}}+0.27(\log T)^{\frac{14}{10}})\log\log T,
    \end{aligned}
\end{equation*}
where  $\mathcal{C}_2=7.65\cdot 10^{10}$ and
\begin{equation*}
    \mathcal{C}_1=\left\{\begin{array}{lll}
    4.68\cdot 10^{23} &  \text{if }3\cdot10^{12}\le T\le e^{46.2},\\ \\
     4.59\cdot 10^{23}   &   \text{if }e^{46.2}< T \le e^{170.2}, \\ \\
        1.45\cdot 10^{23} &   \text{if }e^{170.2}< T\le e^{481958}, \\ \\
        9.77\cdot 10^{21} &   \text{if }T>  e^{481958}.
    \end{array}\right.
\end{equation*}
\end{theorem}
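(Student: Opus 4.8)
The plan is to run Ivić's zero-detection method with Richert's explicit bound as the analytic input, to bound the resulting Dirichlet-polynomial counts by explicit mean-value and large-value inequalities, and to cover the ranges of $\sigma$ (and of $T$) where this performs worst by means of an explicit Korobov--Vinogradov zero-free region. First, since the Riemann Hypothesis has been verified up to height $3\cdot 10^{12}$, one may assume $T>3\cdot10^{12}$: otherwise $N(\sigma,T)=0$ and the claimed bound is trivial, while if $T>3\cdot 10^{12}$ every zero $\rho=\beta+i\gamma$ with $\beta\ge\sigma$ to be counted has $\gamma>3\cdot10^{12}$, so the gamma-factors $\Gamma(1-\rho)$ appearing below are astronomically small. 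Cut $0<\gamma\le T$ into dyadic blocks $(T_0,2T_0]$; inside a block pass to a maximal subset $\mathcal R$ of counted zeros whose ordinates are pairwise at least $1$ apart. By the Riemann--von Mangoldt formula $N(\sigma,T)\ll(\log T)\sum_{\mathrm{blocks}}|\mathcal R|$, so it suffices to bound $R:=|\mathcal R|$ for a single block.

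For an integer $k\ge1$, a smoothing height $Y$ and a mollifier $M_X(s)=\sum_{n\le X}\mu(n)n^{-s}$, consider
\[
\frac{1}{2\pi i}\int_{(2)}\bigl(\zeta(s+\rho)M_X(s+\rho)\bigr)^{k}\Gamma(s)Y^{s}\,ds=\sum_{n\ge1}a(n)\,n^{-\rho}e^{-n/Y},
\]
where $a(1)=1$, $a(n)=0$ for $2\le n\le X$, and $|a(n)|\le d_{2k}(n)$. Shift the contour to $\operatorname{Re}(s+\rho)=\sigma_1$ with $\tfrac12<\sigma_1<\sigma$; the residue at $s=0$ is $(\zeta(\rho)M_X(\rho))^k=0$ and the one at $s=1-\rho$ is negligible. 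Bounding the shifted integral by $Y^{\sigma_1-\sigma}$ times a power of Richert's explicit estimate $|\zeta(\sigma_1+it)|\le A|t|^{B(1-\sigma_1)^{3/2}}(\log|t|)^{2/3}$ (and the trivial bound for $M_X$), one finds that choosing $Y$ of the shape $T^{y}$ with $y$ an explicit multiple of $kB(1-\sigma_1)^{3/2}/(\sigma-\sigma_1)$ makes this integral $\le\tfrac14$ for every $\rho\in\mathcal R$. Hence every $\rho\in\mathcal R$ satisfies $\bigl|\sum_{X<n\le Y(\log Y)^2}a(n)n^{-\rho}e^{-n/Y}\bigr|\ge\tfrac14$.

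Decompose this polynomial dyadically: some $N\in[X,Y(\log Y)^2]$ carries $\gg R/\log Y$ zeros of $\mathcal R$ for which the corresponding subsum is $\gg1/\log Y$. To these apply the explicit Halász--Montgomery large-values inequality together with the explicit mean-value theorem for Dirichlet polynomials, estimating the off-diagonal Gram sums by means of the ordinate separation, using $\sum_{n\le N}d_{2k}(n)^2\ll N(\log N)^{4k^2-1}$, and exploiting $\beta\ge\sigma$ to gain a factor $N^{2(1-\sigma)}$; this yields $R\ll N^{2(1-\sigma)}(\log T)^{O(k^2)}\ll T^{2y(1-\sigma)}(\log T)^{O(k^2)}$, an estimate of the announced shape, since $2y(1-\sigma)$ is a constant multiple of $B(1-\sigma)^{3/2}$; the extra logarithms and the constant $A^k$ carried along inside $Y=T^{y}$ survive only raised to the small power $2(1-\sigma)\le0.04$. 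The two leading terms, with the exponents $57.8875$ and $33.08$, come out of balancing this bound against the mean-value estimate $R\ll(T_0+N)N^{-2\sigma}(\log T)^{O(1)}\sum_{n\le N}d_{2k}(n)^2$ over the admissible range of $N$, from the optimal choices of $k$ and $\sigma_1$, and from the piecewise explicit form of Richert's bound; the small term $0.27(\log T)^{14/10}$ is a trivial boundary estimate.

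Assembling these pieces over the $O(\log T)$ dyadic blocks, optimising $k$, the splitting abscissa $\sigma_1$ and the lengths $X,Y$, and invoking the explicit Korobov--Vinogradov zero-free region on the sub-range of $\sigma$ where the detection is least efficient, one obtains the stated inequality; the cases of $\mathcal C_1$ and the heights $e^{46.2},e^{170.2},e^{481958}$ are exactly the thresholds above which sharper explicit forms of Richert's bound and of the zero-free-region constant become available, and the overall $\log\log T$ is the residue of the final optimisation. I expect the main obstacle to lie precisely in this explicit bookkeeping: bounding the Mellin integral of $(\zeta M_X)^k$ with the (piecewise) explicit Richert constants and running the Halász--Montgomery step with explicit divisor-moment constants, while forcing the power of $\log T$ down to $19703/1800$ --- it is the competition between these two inputs, rather than either one alone, that fixes both the constant $57.8875$ in the exponent of $T$ and the exponents of $\log T$.
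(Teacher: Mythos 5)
Your plan captures the broad shape — Ivić's zero-detection, Richert's bound, Halász--Montgomery, and dividing the $T$-axis into four blocks — but it diverges from the paper's argument in several substantive ways, and at least one of these is a genuine gap rather than a cosmetic difference.

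\textbf{The second class of zeros is not eliminable.} You aim to choose $Y$ so large that the shifted Mellin integral of $\zeta(s+\rho)^kM_X(s+\rho)^k\Gamma(s)Y^s$ is $\le\tfrac14$ for \emph{every} counted $\rho$, leaving only the Dirichlet-polynomial condition. The paper does not do this, and it is not clear it can be done while retaining the claimed exponent. The shifted integrand contains the mollifier $M_X$, and the only bound available for it uniformly in $t$ is the trivial $|M_X(\sigma_1+it)|\ll X^{1-\sigma_1}$. Feeding that in forces $Y^{\sigma-\sigma_1}\gtrsim A\,T^{B(1-\sigma_1)^{3/2}}X^{1-\sigma_1}\log T$, and the resulting $Y^{2(1-\sigma)}$ in the large-values bound then carries the extra factor $X^{2(1-\sigma)(1-\sigma_1)/(\sigma-\sigma_1)}$, which degrades the $T$-exponent. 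The paper instead keeps two classes of zeros: those for which the truncated sum $A=\sum_{X<n\le Y\log Y}a(n)n^{-\rho}e^{-n/Y}$ is $\ge c_0$ (estimated as $R_1$), and those for which the truncated shifted integral $B$ is $\ge c_0$ (estimated as $R_2$). The crucial point is that $R_2$ is \emph{not} bounded by making the integral trivially small; it is bounded by a second application of Halász--Montgomery, this time to the Dirichlet polynomial $M_X$ itself. This two-class balance is what lets the paper pick $X$ and $Y$ of the sizes in \eqref{dfx} and \eqref{dfy} and obtain the exponent $57.8875(1-\sigma)^{3/2}$ from $R_1$ together with the secondary term $33.08(1-\sigma)^{3/2}$ from $R_2$. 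Your single-class scheme produces only one $T$-exponent and does not explain the presence of both $57.8875$ and $33.08$ in the statement.

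\textbf{The four ranges of $T$ come from the zero-free regions, not from Richert.} You attribute the thresholds $e^{46.2},e^{170.2},e^{481958}$ to "sharper explicit forms of Richert's bound" and invoke the Korobov--Vinogradov region only on a "sub-range of $\sigma$". In the paper the piecewise constant $\mathcal C_1$ arises because the admissible upper bound for $\sigma$ — needed to control $\log Y$ via $1/(1-\sigma)$ in \eqref{caseclassic} and its companions — is supplied, in turn, by the classical, intermediate, Littlewood, and Korobov--Vinogradov zero-free regions in those respective $T$-ranges; Richert's estimate \eqref{bound} is used in a single uniform form throughout. You also drop the paper's ordinate separation $|\gamma_r-\gamma_s|>2\log^{1.4}T$ in favour of a unit separation; the larger separation is not optional, since it is exactly what suppresses the $\Gamma(1-i(t_r-t_s))$ off-diagonal contribution in the Gram matrix and what produces the factor $0.45(\log T)^{1.4}\log\log T$ multiplying $R_1+R_2+1$ in \eqref{finalzerod}. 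Finally, the paper does not invoke a mean-value theorem for Dirichlet polynomials or a general power $k$; it works with $k=1$, coefficients bounded by $d(n)$, and the explicit bound $\sum_{n\le x}d(n)^2\le 0.106\,x\log^3x$ of Theorem \ref{michaela-tim}.

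In short: the skeleton (Mellin identity, mollifier, contour shift, large values) is right, but the deletion of the $R_2$ class, the misattribution of the $T$-thresholds, the weaker ordinate-separation, and the appeal to a mean-value theorem not used in the paper would each need to be repaired before this could be turned into a proof of Theorem \ref{theorem1general}.
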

Actually, it is possible to deduce a much simpler upper bound for $N(\sigma,T)$ compared to Theorem \ref{theorem1general}, at the expense of a slightly worse exponent for the factor $(\log T)$.
\begin{theorem}\label{theorem1}
For every $\sigma\in[0.98,1]$ and $T\ge 3$, the following zero-density estimate holds uniformly:\begin{equation}
    N(\sigma,T)\le \mathcal{C}^{\prime}_1T^{57.8875(1-\sigma)^{3/2}}(\log T)^{10393/900},
\end{equation}
where
\begin{equation*}
    \mathcal{C}^{\prime}_1=\left\{\begin{array}{lll}
   2.15\cdot 10^{23} &  \text{if }3\cdot10^{12}< T\le e^{46.2},\\ \\
    1.89\cdot 10^{23}    &   \text{if }e^{46.2}< T \le e^{170.2}, \\ \\
        4.42\cdot 10^{22} &   \text{if }e^{170.2}< T\le e^{481958}, \\ \\
        4.72\cdot 10^{20} &   \text{if } T>  e^{481958}.
    \end{array}\right.
\end{equation*}
\end{theorem}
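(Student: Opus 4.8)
The plan is to deduce Theorem~\ref{theorem1} directly from Theorem~\ref{theorem1general} by collapsing the three‑term bound into a single term, at the cost of enlarging the exponent of $\log T$ from $19703/1800$ to $10393/900$; the surplus power $(\log T)^{1083/1800}$ thereby gained is precisely what absorbs the $\log\log T$ factor together with the two lower‑order terms. First I would dispose of the trivial range: by the verification of the Riemann Hypothesis up to height $3\cdot 10^{12}$ of Platt and Trudgian~\cite{platt_riemann_2021}, $N(\sigma,T)=0$ for $3\le T\le 3\cdot 10^{12}$, so the asserted inequality holds there with room to spare, and it remains to treat $T>3\cdot 10^{12}$.

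For $T>3\cdot 10^{12}$ and $\sigma\in[0.98,1]$ I would start from the bound of Theorem~\ref{theorem1general} and divide through by $T^{57.8875(1-\sigma)^{3/2}}(\log T)^{10393/900}$. Since $1-\sigma\ge 0$ and $33.08<57.8875$, we have $T^{33.08(1-\sigma)^{3/2}}\le T^{57.8875(1-\sigma)^{3/2}}$ and $1\le T^{57.8875(1-\sigma)^{3/2}}$, so after this division the claim reduces to the one‑variable inequality
\begin{equation*}
    \Bigl(\mathcal{C}_1(\log T)^{-1083/1800}+\mathcal{C}_2(\log T)^{-333/900}+0.27\,(\log T)^{-9133/900}\Bigr)\log\log T\le \mathcal{C}^{\prime}_1 ,
\end{equation*}
to be checked with the value of $\mathcal{C}_1$ (and of $\mathcal{C}^{\prime}_1$) appropriate to each of the four $T$‑ranges.

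Next I would verify that the left‑hand side above is a decreasing function of $T$ on the whole range $T>3\cdot 10^{12}$: writing $u=\log T$, each summand has the form $c\,(\log u)\,u^{-a}$ with $c,a>0$, and $\frac{d}{du}\bigl((\log u)\,u^{-a}\bigr)=u^{-a-1}(1-a\log u)<0$ as soon as $\log u>1/a$; for the three relevant exponents $a\in\{1083/1800,\,333/900,\,9133/900\}$ one has $1/a<3\le\log\log(3\cdot 10^{12})\le\log\log T$, so the monotonicity holds throughout. Hence it suffices to verify the displayed inequality at the left endpoint of each range, i.e.\ for $T\in\{3\cdot 10^{12},\,e^{46.2},\,e^{170.2},\,e^{481958}\}$ with the matching pair $(\mathcal{C}_1,\mathcal{C}^{\prime}_1)$; this is a finite numerical check.

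The one delicate point is the sizing of the constants $\mathcal{C}^{\prime}_1$: in the first range, at $T=3\cdot 10^{12}$, the $\mathcal{C}_1$‑term alone already evaluates to roughly $2.08\cdot 10^{23}$ against $\mathcal{C}^{\prime}_1=2.15\cdot 10^{23}$ (the $\mathcal{C}_2$‑ and $0.27$‑terms being utterly negligible by comparison), so the inequality holds with only a few percent of slack and one must carry enough precision when evaluating $(\log T)^{1083/1800}$ and $\log\log T$; the remaining three ranges are progressively more comfortable. Everything else is routine arithmetic, and I expect no conceptual obstacle — the whole content is the bookkeeping showing that the single clean term of Theorem~\ref{theorem1} dominates the refined bound of Theorem~\ref{theorem1general}.
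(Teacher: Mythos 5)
Your proposal is correct and follows essentially the same route as the paper: both derive Theorem~\ref{theorem1} from Theorem~\ref{theorem1general} by elementary algebra and a finite numerical check at the endpoint(s) of each $T$-range, the only cosmetic difference being that the paper first collects the three terms under the larger log-exponent $503/45$ to define $\mathcal{C}'_1$ and then bounds $\log\log T\le(\log T)^{0.37}$ to reach $10393/900$, whereas you divide by the target $T^{57.8875(1-\sigma)^{3/2}}(\log T)^{10393/900}$ at once and use the monotonicity of $(\log u)u^{-a}$. Your variant is marginally sharper (since $\log\log T<(\log T)^{0.37}$ is not saturated at $T=3\cdot10^{12}$), and your endpoint evaluation $\approx 2.08\cdot10^{23}<2.15\cdot10^{23}$ matches the paper's constant.
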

We observe that the exponent $10393/900=11.547\dots$ for the factor $\log T$ is less than $11.548$, which improves the best-known exponent equal to $15$ due to Ivic \cite{Ivic2003TheRZ}. Furthermore, the value $10393/900$ is the near-optimal one that one can get using Ivić's zero-detection method. Indeed, a factor $(\log T)^3$ comes from the relation for the divisor function in Theorem \ref{michaela-tim}, where the powers of the logarithm are already the optimal ones. Then, Ivić's zero-detection method produces $5$ more powers of $\log T$, as we will see later, that cannot be reduced, when using this method. Finally, another factor $\log T$ comes from the number of zeros in Theorem \ref{thnt}. Hence, all these contributions give already $9$ powers that cannot be removed. The remaining ones come from convergence arguments throughout the proof, where we already tried to optimize our choices as much as possible.
\subsection{Range of $\sigma$ for which Theorem \ref{theorem1} is the sharpest bound.}
As expected, the zero-density estimate found in Theorem \ref{theorem1} becomes powerful when $\sigma$ is very close to $1$, since when $\sigma$ is sufficiently close to $1$ Richert's bound becomes the sharpest one for $\zeta$ inside the critical strip and the Korobov--Vinogradov zero-free region is the widest one. We briefly analyze which is the lowest value of $\sigma$ from which Theorem \ref{theorem1} start being sharper than Kadiri--Lumley--Ng's result \cite{KADIRI201822}, which is of the form 
\begin{equation}\label{kadiri}
    N(\sigma, T) \leq \frac{\mathcal{C}_1}{2 \pi d}(\log (k T))^{2 \sigma}(\log T)^{5-4 \sigma} T^{\frac{8}{3}(1-\sigma)}+\frac{\mathcal{C}_2}{2 \pi d}(\log T)^2,
\end{equation}
where $\mathcal{C}_1,\mathcal{C}_2$ and $d$ are suitable constants defined in Theorem 1.1 of \cite{KADIRI201822}. In order to simplify the notation and the calculus, we rewrite \eqref{kadiri} as
\[
N(\sigma,T)\le CT^{\frac{8}{3}(1-\sigma)}\log^3T,
\]
being $5-4\sigma+2\sigma\rightarrow 3$ for $\sigma$ close to $1$ and the second term in \eqref{kadiri} is included in the constant $C\ge 1$. We want to find $\sigma, T$ such that \begin{equation}\label{mybetter}
    CT^{\frac{8}{3}(1-\sigma)}\log^3T\ge \mathcal{C}_1^\prime T^{B(1-\sigma)^{3/2}}\log^{10393/900}T,
\end{equation}
where $B= 57.8875$ and $\mathcal{C}_1^{\prime}$ is the constant defined in Theorem \ref{theorem1}.
The inequality \eqref{mybetter} holds if and only if 
\[
T^{\frac{8}{3}(1-\sigma)}\ge \frac{\mathcal{C}_1^\prime}{C}T^{B(1-\sigma)^{3/2}}(\log T)^{7693/900}.
\]
Applying the logarithm to both sides the previous inequality becomes
\[
\frac{8}{3}(1-\sigma)\log T\ge \log{\frac{\mathcal{C}_1^\prime}{C}}+B(1-\sigma)^{3/2}\log T+\frac{7693}{900}\log\log T.
\]
Hence, for every fixed $T\ge 3$, our estimate in Theorem \ref{theorem1} is sharper when 
\[
\sigma\le 1- \frac{1}{\left(\frac{8}{3}-B(1-\sigma)^{1/2}\right)}\left(\frac{\log(\mathcal{C}_1^\prime/C)}{\log T}+\frac{7693\log\log T}{900\log T}\right),
\]
or, since the quantity $B(1-\sigma)^{1/2}$ is negligible when $\sigma$ is really close to $1$, for 
\begin{equation}\label{rangesigma}
  \sigma\le1-\frac{3}{8}\left(\frac{\log(\mathcal{C}_1^\prime/C)}{\log T}+\frac{7693\log\log T}{900\log T}\right).  
\end{equation}
Furthermore, we also want to see for which $T$ Theorem \ref{theorem1} is sharper than \eqref{kadiri} uniformly inside the range $\sigma\in[0.98,1)$. In order to that, we find the range of $T$ for which the current best-known zero-free regions for $\zeta$ have non-empty intersection with the region \eqref{rangesigma}. We start considering the best-known Korobov-Vinogradov zero-free region \eqref{kvzerofree} \cite{bellotti2023explicit}, which is the sharpest for $T\ge e^{481958}$. The intersection of the two regions is non-empty if
\[
 1-\frac{1}{53.989\log^{2/3}T(\log\log T)^{1/3}}\le 1-\frac{3}{8}\left(\frac{\log(\mathcal{C}_1^\prime/C)}{\log T}+\frac{7693\log\log T}{900\log T}\right)
\]
i.e.
\[
\frac{1}{53.989\log^{2/3}T(\log\log T)^{1/3}}\ge \frac{3}{8}\left(\frac{\log(\mathcal{C}_1^\prime/C)}{\log T}+\frac{7693\log\log T}{900\log T}\right).
\]
Using $\mathcal{C}_1^\prime=4.72\cdot 10^{20}$ and $C\ge 1$, the inequality is true for $T\ge \exp(6.7\cdot 10^{12})$. Since the range of values of $T$ for which Theorem \ref{theorem1} is sharper is included in the range of values for which the Korobov-Vinogradov zero-free region is the widest one, we do not need to check the intersection of the region \eqref{rangesigma} with the classical zero-free region and the Littlewood one.\\
Furthermore, following exactly the proof of Theorem \ref{theorem1}, the constant $\mathcal{C}_1^{\prime}$ can be further improved for $T\ge \exp(6.7\cdot 10^{12}) $.
\begin{theorem}\label{theorem2}
   For $T\ge \exp(6.7\cdot 10^{12})$ we have
   \[
   N(\sigma,T)\le 4.45\cdot 10^{12}\cdot T^{57.8875(1-\sigma)^{3/2}}(\log T)^{10393/900}.
   \]
\end{theorem}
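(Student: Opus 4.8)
The plan is to follow the proof of Theorem~\ref{theorem1} essentially verbatim, but to re-track every constant in the regime $T\ge\exp(6.7\cdot10^{12})$, where $\log T$ and $\log\log T$ are both enormous. The architecture is unchanged: apply Ivić's zero-detection method to separate the zeros counted by a suitable mollified Dirichlet polynomial from a remainder; bound the associated mean-value and large-values estimates using the divisor-function relation of Theorem~\ref{michaela-tim} and Richert's bound $|\zeta(\sigma+it)|\le A|t|^{B(1-\sigma)^{3/2}}(\log|t|)^{2/3}$ with the explicit constants of \cite{bellotti2023explicit}; and finish with the zero-counting bound of Theorem~\ref{thnt} together with the Korobov--Vinogradov zero-free region \eqref{kvzerofree}, which for $T$ this large is the widest one available. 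The exponents $57.8875(1-\sigma)^{3/2}$ and $10393/900$ are exactly those of Theorem~\ref{theorem1}; only the leading constant changes.

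First I would note that the three ``extra'' pieces isolated after Theorem~\ref{theorem1general} --- the secondary term $\mathcal{C}_2T^{33.08(1-\sigma)^{3/2}}(\log T)^{503/45}$, the term $0.27(\log T)^{14/10}$, and the outer factor $\log\log T$ --- each sit below the main term $\mathcal{C}_1T^{57.8875(1-\sigma)^{3/2}}(\log T)^{19703/1800}$ by a saving that is a genuine power of $T$ or of $\log T$. At $T\ge\exp(6.7\cdot10^{12})$ such a saving dwarfs $\log\log T$, so after absorption one lands on the clean form $\mathcal{C}_1'T^{57.8875(1-\sigma)^{3/2}}(\log T)^{10393/900}$ with a large amount of slack. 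The purpose of Theorem~\ref{theorem2} is to spend that slack, together with the improved zero-free region, on the numerical constant: many of the inequalities used in proving Theorem~\ref{theorem1} were estimated crudely under the sole hypothesis $T\ge3\cdot10^{12}$, and in the much larger range each may be replaced by its asymptotic value.

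Concretely, the steps are: (i)~re-optimize the degree and length of the mollifier and the dyadic splitting of the detection sum, now treating error terms of relative size $(\log T)^{-1}$ or $T^{-c(1-\sigma)^{3/2}}$ as negligible; (ii)~recompute the constants in Richert's bound, in Theorem~\ref{michaela-tim}, and in Theorem~\ref{thnt} for $T\ge\exp(6.7\cdot10^{12})$, keeping the optimal logarithmic powers --- the $(\log T)^3$ from the divisor bound, the five powers produced by the detection method, and the single power from zero counting, for a total $(\log T)^9$, the remaining $2293/900$ powers coming, as in Theorem~\ref{theorem1}, from convergence arguments; (iii)~multiply through, collapse the secondary contributions into the main term, and verify numerically that the resulting leading constant is at most $4.45\cdot10^{12}$, roughly eight orders of magnitude below the $T>e^{481958}$ case of Theorem~\ref{theorem1}.

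The main point --- and the only real work --- lies in steps (i)--(ii): pushing the constant down that far while checking that each re-optimized parameter remains admissible, in particular that the mollifier length stays inside the range where the relevant mean-value and large-values lemmas apply, and that no inequality used in the original argument was tight for a reason other than the finiteness of $T$. A secondary subtlety is confirming that the threshold $\exp(6.7\cdot10^{12})$ --- inherited from the intersection of \eqref{rangesigma} with \eqref{kvzerofree} computed above --- is genuinely large enough for all of these absorptions and re-optimizations to go through; were it not, one would either nudge the threshold upward or retain a marginally larger constant.
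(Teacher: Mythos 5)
Your proposal matches the paper's approach exactly: the paper's own "proof" of Theorem~\ref{theorem2} consists of the single remark that it follows "following exactly the proof of Theorem~\ref{theorem1}" for $T\ge\exp(6.7\cdot10^{12})$, i.e.\ precisely the constant-retracking you describe, with the dominant gain coming from the bound $\log Y\le 52.51\log T$ tightening dramatically once $\log T\sim 10^{12}$ (since the $1/(1-\sigma)$ contributions, controlled via \eqref{kvzerofree}, become $o(\log T)$), which feeds through $(\log Y/\log T)^6$ in $C_5$ and hence into $\mathcal{C}_1'$. One small imprecision: the explicit constants in Richert's bound (Theorem~\ref{bound}) and in Theorem~\ref{thnt} are fixed and do not themselves get recomputed; the improvement is in how they combine with $\log Y/\log T$, $C_1$--$C_4$, and the divisor-sum constant of Theorem~\ref{michaela-tim} (which improves only marginally as $X$ grows).
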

We notice that our estimate beats Kadiri-Lumley-Ng's result for values of $T$ quite large. A major impediment is the power of the log-factor, i.e $10393/900\sim 11.547$ which is much larger than the log-power $\sim 3$ in \cite{KADIRI201822}. To get an explicit estimate of the form \eqref{ivicest} which is always sharper than \eqref{kadiri}, one should get in Theorem \ref{theorem1} a log-power less or at most equal to $3$, as in this case the term $T^{B(1-\sigma)^{3/2}}$ would be always dominant on $T^{8/3(1-\sigma)}$, being $(1-\sigma)<1$. However, as we already explained before, the current argument which uses Ivic's zero-detection method prevent us from getting $(\log T)^3$ in Theorem \ref{theorem1}, having a contribution of a log-factor at least $(\log T)^{9}$ based on the previous analysis. One way to overcome the logarithm problem is trying to find an explicit log-free zero density estimate (see \cite{montgomery_topics_1971}, $\S12$ for non-explicit results), which hopefully will improve \cite{KADIRI201822} when $T$ is relatively small. This might be also some interesting material for future work.\\
However, an improvement in Richert's bound and, as a consequence, an improved Korobov-Vinogradov zero-free region would lead anyway to an improved zero-density estimate of this type, getting a better constant $\mathcal{C}_1^{\prime}$, a better value of $B<57.8875$ in \eqref{ivicest} and a wider range of values of $T$ and $\sigma$ for which a zero-density estimate of the type \eqref{ivicest} is sharper than \eqref{kadiri}.
\section{Background}
We recall some results that will be useful for the proof of Theorem \ref{theorem1}.\\ First of all we give an overview of the currently best-known explicit zero-free regions for the Riemann zeta function. As we already mention, Platt and Trudgian \cite{platt_riemann_2021} verified that the Riemann Hypothesis is true up to height $3\cdot 10^{12}$. Hence, in the proof of Theorem \ref{theorem1} we will work with $T\ge 3\cdot 10^{12}$. Now, we list the best-known zero-free regions for $\zeta$. All of them hold for every $|T|\ge 3$, but we indicate the range of values for $|T|$ for which each of them is the widest one.
\begin{itemize}
    \item For $3\cdot 10^{12}< |T|\le e^{46.2}$ the largest zero-free region\cite{mossinghoff_explicit_2022} is the classical one:
    \begin{equation}\label{classicalzerofree}
        \sigma\ge 1-\frac{1}{5.558691 \log |T|}
    \end{equation}
    \item For $ e^{46.2}<|T|\le e^{170.2}$ the currently sharpest known zero-free region is \begin{equation}\label{zerofreeintermediate}
\sigma>1-\frac{0.04962-0.0196 /(J(|T|)+1.15)}{J(|T|)+0.685+0.155 \log \log |T|},
\end{equation}
where $J(T):=\frac{1}{6} \log T+\log \log T+\log 0.618$. As per \cite{yang2023explicit}, this result is obtained by substituting Theorem 1.1 of \cite{hiary2022improved} in Theorem 3 of \cite{ford_zero_2022} and observing that $J(T)<\frac{1}{4} \log T+1.8521$ for $T\geq 3$.
\item For $e^{170.2}< |T|\le e^{481958}$, Littlewood zero-free region \cite{yang2023explicit} becomes the largest one\footnote{The constant $21.432$ in \cite{yang2023explicit} has been updated to $21.233$ (communicated by the author, who has sent a proof of the revised Littlewood zero-free region).}:
\begin{equation}\label{littlewoodzerofree}
    \sigma\ge 1-\frac{\log\log |T|}{21.233\log |T|}.
\end{equation}
\item Finally, for $|T|> e^{481958}$, Korobov--Vinogradov zero-free region \cite{bellotti2023explicit} is the widest one\footnote{The value $54.004$ in \cite{bellotti2023explicit} can be improved to $53.989$ due to the improved estimate \eqref{littlewoodzerofree}.}: \begin{equation}\label{kvzerofree}
     \sigma\ge  1-\frac{1}{53.989\log^{2/3}|T|(\log\log |T|)^{1/3}}.
\end{equation}
\end{itemize}
The method used to detect the Korobov--Vinogradov zero-free region involves Richert's bound $|\zeta(\sigma+it)|\le A |t|^{B(1-\sigma)^{3/2}}(\log |t|)^{2/3}$, which will also be an essential tool in our proof of Theorem \ref{theorem1}, as we already mentioned. Below, we state the best-known estimate of this type (see \cite{ford_vinogradovs_2002} for previous results).
\begin{theorem}[\cite{bellotti2023explicit} Th.1.1]\label{bound}
    The following estimate holds for every $|t|\ge 3$ and $\frac{1}{2}\le \sigma\le 1$:
\begin{equation*}
    \begin{aligned}
    |\zeta(\sigma+it)|&\le A |t|^{B
    (1-\sigma)^{3/2}}\log^{2/3}|t|\\
        \left|\zeta(\sigma+i t, u)-u^{-s}\right| &\leq A |t|^{B(1-\sigma)^{3 / 2}} \log ^{2 / 3} |t|, \qquad 0<u \leq 1,
    \end{aligned}
\end{equation*}with $A=70.6995$ and $B=4.43795$.
\end{theorem}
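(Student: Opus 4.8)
The plan is to follow the explicit Vinogradov--Korobov strategy of Ford \cite{ford_vinogradovs_2002}, sharpening every numerical input. First I would replace $\zeta(\sigma+it)$ by a finite Dirichlet polynomial: by Euler--Maclaurin summation, for $\tfrac12\le\sigma\le1$, $|t|\ge3$ and a truncation $N$ to be chosen of size roughly $|t|$,
\[
\zeta(\sigma+it)=\sum_{n\le N}n^{-\sigma-it}+\frac{N^{1-\sigma-it}}{s-1}+E(N,s),
\]
where $E(N,s)$ is bounded explicitly in terms of $N^{-\sigma}$ and $|t|N^{-\sigma-1}$. Decomposing $\sum_{n\le N}$ into $O(\log|t|)$ dyadic blocks $(M,2M]$ and stripping the weight $n^{-\sigma}$ by partial summation, the block $(M,2M]$ contributes at most $M^{-\sigma}S(M;t)$, where
\[
S(M;t):=\max_{M<M_1\le2M}\ \Bigl|\sum_{M<n\le M_1}n^{-it}\Bigr|=\max_{M<M_1\le2M}\ \Bigl|\sum_{M<n\le M_1}e^{-it\log n}\Bigr|,
\]
so the whole problem reduces to estimating these pure exponential sums for dyadic $M\le N$.

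To bound $S(M;t)$ I would use Vinogradov's mean-value method. On $(M,2M]$ the phase $-\tfrac{t}{2\pi}\log x$ has $j$-th derivative of size $\asymp|t|/M^{j}$, so after Taylor-truncating at a free degree $k$ the sum is controlled by the explicit Vinogradov mean value theorem in its now sharp, ``main-conjecture'' form (the bound on the number of solutions of the Vinogradov system, made fully explicit; this is the principal quantitative gain over the constants of \cite{ford_vinogradovs_2002}). Inserting this into the standard passage from mean values to individual Weyl sums, and choosing $k=k(M,|t|)\asymp\log|t|/\log M$ to optimise the exponent, yields an estimate of the shape
\[
S(M;t)\ \le\ B_1\,M\,\exp\!\Bigl(-c\,\frac{(\log M)^{3}}{(\log|t|)^{2}}\Bigr),
\]
with the complementary trivial bound $S(M;t)\le M$ (and, for small $M$, van der Corput's estimates) covering the ranges where Vinogradov's method is not the cheapest; all constants here being explicit.

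The final step is the optimisation of the dyadic sum. Writing $M=2^{j}$, the above bounds $\sum_{n\le N}n^{-\sigma-it}$ by a constant multiple of $\sum_{j}2^{j(1-\sigma)}\exp\!\bigl(-c\,j^{3}/(\log|t|)^{2}\bigr)$, plus the Euler--Maclaurin tail. For $\sigma$ close to $1$ this sum is, up to constants, governed by its value at $\sigma=1$ --- the terms stay $\asymp1$ until $j\asymp(\log|t|)^{2/3}$ and decay beyond, giving $\asymp(\log|t|)^{2/3}$ --- while the genuine growth $|t|^{B(1-\sigma)^{3/2}}$ comes from the peak at $j\asymp\log|t|\,\sqrt{1-\sigma}$, where the exponent equals $\asymp(1-\sigma)^{3/2}\log|t|$, once $1-\sigma\gg(\log|t|)^{-2/3}$. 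This is the source of both the exponent $3/2$ and the logarithmic power $2/3$. The explicit constants $A=70.6995$, $B=4.43795$ then emerge by tracking every numerical loss through (i) the Euler--Maclaurin remainder, (ii) the explicit Vinogradov mean value bound, (iii) the mean-value-to-Weyl-sum step, (iv) the partial summation, and (v) the dyadic sum, and numerically optimising all free parameters (the truncation $N$, the degrees $k$, and the thresholds separating the trivial, van der Corput and Vinogradov regimes). Away from $\sigma=1$ the stated bound is not tight --- the dyadic sum alone already gives much more --- so the full strength of the method is deployed only near $\sigma=1$.

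I expect the principal difficulty to be the explicit bookkeeping rather than any conceptual hurdle: reaching these particular constants requires an essentially optimal explicit form of the Vinogradov mean value theorem and a careful treatment of the boundary regimes --- $M$ small, where trivial or van der Corput bounds take over, and $M$ comparable to $|t|$, where the truncation $N$ and the Euler--Maclaurin tail must be balanced delicately --- so that the constants compounded through the five steps above do not inflate past the target. Finally, the incomplete-zeta inequality is obtained by running the identical argument on $\zeta(\sigma+it,u)-u^{-s}=\sum_{n\ge1}(n+u)^{-s}$, the shift $n\mapsto n+u$ with $0<u\le1$ leaving the sizes of all derivatives of the phase --- hence every intermediate estimate --- unchanged.
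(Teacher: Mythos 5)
Theorem \ref{bound} is not proved in this paper; it is quoted verbatim as Theorem~1.1 of \cite{bellotti2023explicit} and is used here purely as a black-box input (it supplies the quantity $\mathcal{M}(\alpha,T)$ and the constants $A$, $B$ that drive the zero-detection argument). There is therefore no internal proof to compare your sketch against. That said, your outline does correctly describe the Ford--Vinogradov--Korobov framework that the cited reference builds on (\cite{ford_vinogradovs_2002}): approximation of $\zeta$ by a finite Dirichlet polynomial, dyadic decomposition, bounds on the resulting exponential sums via an explicit Vinogradov mean-value input together with the passage from mean values to individual Weyl sums, and a final optimisation over the dyadic blocks that produces the exponents $(1-\sigma)^{3/2}$ and $(\log|t|)^{2/3}$, with trivial and van der Corput bounds taking over in the boundary regimes; the remark that the Hurwitz-zeta variant follows by the same argument, since shifting $n\mapsto n+u$ does not change the derivatives of the phase, is also in the right spirit. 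But this is only a roadmap: the entire content of the explicit theorem is precisely the bookkeeping you defer to the end, and the specific values $A=70.6995$ and $B=4.43795$ cannot be extracted from the outline without carrying out the explicit mean-value estimate, the Weyl-shift step, and the balancing of the small-$M$ and near-truncation ranges, all of which live in \cite{bellotti2023explicit} rather than in the present paper. If you wish to check or engage with those constants, that reference is where the work has to be done.
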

Related to the zeros of $\zeta$, we state the best-known explicit estimate for the quantity $N(T)$, which is the number of zeros $\rho=\beta+i\gamma$ of $\zeta(s)$ with $0\le \gamma\le T$. 
\begin{theorem}[\cite{HASANALIZADE2022219} Corollary 1.2]\label{thnt}
  For any $T\ge e$ we have  \begin{equation*}
\left|N(T)-\frac{T}{2 \pi} \log \left(\frac{T}{2 \pi e}\right)\right| \leq 0.1038 \log T+0.2573 \log \log T+9.3675.
\end{equation*}
\end{theorem}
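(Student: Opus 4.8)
\emph{Proof proposal.} This is a classical Riemann--von Mangoldt estimate, and the plan is to reprove it by carrying out the standard argument (in the form due to Backlund, with the numerical refinements of Trudgian) with every step made explicit. First I would pass to the completed zeta function $\xi(s)=\tfrac12 s(s-1)\pi^{-s/2}\Gamma(s/2)\zeta(s)$, entire of order $1$, with zeros exactly at the non-trivial zeros of $\zeta$ and satisfying $\xi(s)=\xi(1-s)$ and $\xi(\bar s)=\overline{\xi(s)}$. For $T$ not the ordinate of a zero, the argument principle on the positively oriented rectangle with vertices $-1,\ 2,\ 2+iT,\ -1+iT$ gives $2\pi N(T)=\Delta\arg\xi(s)$ around the boundary; the two symmetry relations collapse this, up to an explicit bounded term, to $\tfrac1\pi$ times the change of $\arg\xi(s)$ along the broken line $\mathcal L$ running from $2$ up to $2+iT$ and then horizontally to $\tfrac12+iT$ (the case of $T$ an ordinate of a zero follows by a limiting argument).

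Next I would extract the main term. Writing $\arg\xi=\arg\bigl[\tfrac12 s(s-1)\bigr]+\arg\pi^{-s/2}+\arg\Gamma(s/2)+\arg\zeta(s)$, the first two pieces contribute, along $\mathcal L$, $\tfrac{\pi}{2}+O(1/T)$ and $-\tfrac{T}{2}\log\pi$ with explicit errors, while for $\arg\Gamma(s/2)$ I would apply Stirling's formula with an explicit remainder, $\log\Gamma(z)=(z-\tfrac12)\log z-z+\tfrac12\log(2\pi)+\theta(z)/(6|z|)$ with $|\theta(z)|\le1$ for $\Re z>0$, obtaining $\tfrac1\pi\Delta_{\mathcal L}\arg\Gamma(s/2)=\tfrac{T}{2\pi}\log\tfrac{T}{2}-\tfrac{T}{2\pi}-\tfrac18+O(1/T)$. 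Collecting these, the contribution of everything except $\zeta$ equals $\tfrac{T}{2\pi}\log\tfrac{T}{2\pi e}+\tfrac78$ up to an explicit $O(1/T)$ term; since the target bound grows like $\log T$, this part can be treated generously.

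It then remains to bound $S(T):=\tfrac1\pi\arg\zeta(\tfrac12+iT)$, together with the variation of $\arg\zeta$ along the vertical part of $\mathcal L$; on $\Re s=2$ one has $\Re\zeta(s)\ge 1-(\zeta(2)-1)>0$, so $\zeta\ne0$ there and both $|\arg\zeta(2+it)|$ and the total variation of $\arg\zeta$ along that segment are bounded by an explicit absolute constant. For $S(T)$ I would use the Backlund device $\pi|S(T)|\le\pi(1+\nu)$, where $\nu$ is the number of sign changes of $\sigma\mapsto\Re\zeta(\sigma+iT)$ on $[\tfrac12,2]$. Introducing $f(z):=\tfrac12\bigl(\zeta(z+iT)+\overline{\zeta(\bar z+iT)}\bigr)$, which is analytic and equals $\Re\zeta(\sigma+iT)$ for real $z=\sigma$, $\nu$ is at most the number of zeros of $f$ on that segment, which is bounded via Jensen's inequality on a disc centred at $z=2$ (where $|f(2)|\ge 1-(\zeta(2)-1)$ is an explicit positive constant): the number of zeros inside the concentric disc of radius $r$ is at most $\tfrac{1}{\log(R/r)}\log\bigl(\max_{|z-2|=R}|f(z)|/|f(2)|\bigr)$. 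The maximum is controlled by an explicit upper bound for $|\zeta(s)|$ in the box $|\Im s-T|\le R$, $2-R\le\Re s\le 2+R$, using $|\zeta(s)|\le\zeta(\Re s)$ when $\Re s>1$ and an explicit Phragm\'en--Lindel\"of/convexity bound $|\zeta(s)|\ll|\Im s|^{\kappa}$ with small $\kappa$ on the remaining part. Optimising $R$, $r$ and the auxiliary bound for $|\zeta|$ produces $|S(T)|\le a\log T+b\log\log T+c$ with $a=0.1038$; adding the explicit $O(1)$ contributions from the previous steps, and checking the range of moderate $T$ (say $e\le T\le T_0$) either numerically or by a cruder direct estimate, yields the claimed inequality.

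The main obstacle is this last step: one needs an explicit bound for $|\zeta(s)|$ in the near-critical strip that simultaneously has a small power of $|t|$ and a small explicit constant, because the coefficient of $\log T$ in the final estimate is essentially $\kappa/\log(R/r)$, and bringing it down to $0.1038$ forces a delicate joint optimisation of $\kappa$, $R$ and $r$. The rest of the argument, though lengthy, amounts to bookkeeping of explicit constants and is routine in principle.
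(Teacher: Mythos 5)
The paper does not prove this statement; it is quoted verbatim from Hasanalizade--Shen--Wong \cite{HASANALIZADE2022219}, so there is no internal proof for me to compare against. That said, your sketch is an accurate high-level description of the argument used in that reference: pass to $\xi$, apply the argument principle on a rectangle, peel off the Stirling main term $\frac{T}{2\pi}\log\frac{T}{2\pi e}+\frac{7}{8}$, and reduce the problem to bounding $S(T)$ via Backlund's sign-change device, with the sign changes controlled by Jensen's inequality applied to $f(z)=\frac12\bigl(\zeta(z+iT)+\overline{\zeta(\bar z+iT)}\bigr)$ on a disc centred to the right of $\Re s=1$.

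Two caveats worth flagging. First, your formula for the coefficient of $\log T$ as ``essentially $\kappa/\log(R/r)$'' is a fair heuristic but slightly oversimplifies the bookkeeping: the convexity bound one feeds into Jensen is not of the simple form $|t|^{\kappa}$ on the whole boundary circle, but interpolates between $\zeta(\sigma)$ for $\sigma>1$ and a sub-convexity-type bound near and to the left of the critical line, and the disc is not centred at $2$ in \cite{HASANALIZADE2022219} but at a parameter $\sigma_0>1$ that is itself optimised (pushing $\sigma_0$ down toward $1$ shrinks $|f(\sigma_0)|$ and is one of the trade-offs). Second, to actually land on $0.1038$, $0.2573$, $9.3675$ one needs the specific explicit estimates of \cite{HASANALIZADE2022219} (in particular their explicit bound for $\zeta$ on vertical lines and their careful treatment of the Jensen radius as a function of $T$); the numbers are not forced by the method. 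You correctly identify this optimisation as the genuine obstacle, so the plan is sound as a plan, but reproducing the quoted constants is where essentially all the work lies.

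Finally, a small point: the clean identity $N(T)=\frac{T}{2\pi}\log\frac{T}{2\pi e}+\frac78+S(T)+O(1/T)$ holds for $T$ not an ordinate of a zero, and extending the final inequality to all $T\ge e$ needs either the limiting argument you allude to or an explicit account of the jump of $N$ at ordinates (both routine, but should be stated).
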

A powerful tool that is widely used in Ivić's zero-detection method is the Halász--Montgomery inequality we recall below.
\begin{theorem}[\cite{Ivic2003TheRZ} A.39, A.40]\label{montineq}
  Let $\xi,\varphi_1,\dots,\varphi_R$ be arbitrary vectors in an inner-product vector space over $\mathbb{C}$, where $(a,b)$ will be the notation for the inner product and $||a||^2=(a,a)$. Then
  \[
  \sum_{r\le R}|(\xi,\varphi_r)|\le ||\xi||\left(\sum_{r,s\le R}|(\varphi_r,\varphi_s)|\right)^{1/2}
  \]
  and
   \[
  \sum_{r\le R}|(\xi,\varphi_r)|\le ||\xi||^2\max_{r\le R}\sum_{s\le R}|(\varphi_r,\varphi_s)|.
  \]
\end{theorem}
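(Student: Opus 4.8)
The plan is to establish both bounds by the standard dualization argument: rewrite the left-hand sum as a single inner product of $\xi$ against a suitably weighted linear combination of the vectors $\varphi_r$, apply the Cauchy--Schwarz inequality, and then control the resulting norm by expanding it over the Gram matrix $\big((\varphi_r,\varphi_s)\big)_{r,s\le R}$ and applying the triangle inequality term by term. Throughout I would fix the convention that $(\cdot,\cdot)$ is linear in the first slot and conjugate-linear in the second; the opposite convention only moves the conjugate bars in what follows.

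For the first inequality I would begin by choosing unimodular scalars $c_r\in\mathbb{C}$, $|c_r|=1$, with $c_r(\xi,\varphi_r)=|(\xi,\varphi_r)|$ for every $r$ (take $c_r=\overline{(\xi,\varphi_r)}/|(\xi,\varphi_r)|$ when $(\xi,\varphi_r)\neq 0$, and $c_r=1$ otherwise), and then set $\psi=\sum_{r\le R}\overline{c_r}\,\varphi_r$. By linearity, $\sum_{r\le R}|(\xi,\varphi_r)|=\sum_{r\le R}c_r(\xi,\varphi_r)=(\xi,\psi)$, which is real and nonnegative, so Cauchy--Schwarz gives $(\xi,\psi)\le\|\xi\|\,\|\psi\|$. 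Finally I would expand $\|\psi\|^2=\sum_{r,s\le R}\overline{c_r}c_s(\varphi_r,\varphi_s)$ and bound it by $\sum_{r,s\le R}|(\varphi_r,\varphi_s)|$ using $|\overline{c_r}c_s|=1$ and the triangle inequality; inserting this into the previous line yields the first estimate.

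For the second inequality — whose left-hand side I read as $\sum_{r\le R}|(\xi,\varphi_r)|^2$, the shape actually used in the applications — I would instead put $a_r=(\xi,\varphi_r)$ and $\eta=\sum_{r\le R}a_r\varphi_r$, so that $(\xi,\eta)=\sum_{r\le R}|a_r|^2$ and Cauchy--Schwarz yields $\sum_{r\le R}|a_r|^2\le\|\xi\|\,\|\eta\|$. The key step is then to expand $\|\eta\|^2=\sum_{r,s\le R}a_r\overline{a_s}(\varphi_r,\varphi_s)$, apply $|a_r\overline{a_s}|\le\tfrac12(|a_r|^2+|a_s|^2)$ together with the symmetry $|(\varphi_r,\varphi_s)|=|(\varphi_s,\varphi_r)|$ to collapse the double sum to $\sum_{r\le R}|a_r|^2\sum_{s\le R}|(\varphi_r,\varphi_s)|\le G\sum_{r\le R}|a_r|^2$ with $G:=\max_{r\le R}\sum_{s\le R}|(\varphi_r,\varphi_s)|$, and then divide the resulting inequality $\sum_{r\le R}|a_r|^2\le\|\xi\|\,G^{1/2}\big(\sum_{r\le R}|a_r|^2\big)^{1/2}$ by $\big(\sum_{r\le R}|a_r|^2\big)^{1/2}$ and square. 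If only a bound on the unsquared sum $\sum_{r\le R}|(\xi,\varphi_r)|$ were wanted, it would follow by combining this with Cauchy--Schwarz in the index $r$.

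I do not expect a genuine obstacle here: the argument is elementary linear algebra, and the only real idea is the choice of the test vector ($\psi$ in the first case, $\eta$ in the second). The places that deserve care are the bookkeeping of complex conjugates, which is dictated by the inner-product convention, and the AM--GM symmetrization of the Gram sum in the second inequality; a minor point worth recording is that the displayed form of the second inequality should carry a square on $|(\xi,\varphi_r)|$, since without it the bound already fails for orthonormal $\varphi_r$.
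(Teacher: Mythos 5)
Your proof is correct and is the standard dualization argument (choose a test vector, apply Cauchy--Schwarz, expand the resulting norm over the Gram matrix, symmetrize with AM--GM for the second bound). Note, though, that the paper does not prove this result: Theorem~\ref{montineq} is cited directly from Ivi\'c's book (results A.39 and A.40 there), so there is no in-paper argument to compare against; your proof essentially reconstructs the one in that reference.

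You are also right that the second displayed inequality as printed in the paper is missing a square: it should read $\sum_{r\le R}|(\xi,\varphi_r)|^2\le \|\xi\|^2\max_{r\le R}\sum_{s\le R}|(\varphi_r,\varphi_s)|$. As written it already fails for an orthonormal family $\varphi_1,\dots,\varphi_R$ with $\xi=\varphi_1$ (left side $1$, right side $\|\xi\|^2=1$ only if $R=1$; but more tellingly it is not homogeneous of the correct degree in $\xi$). This is a transcription error in the paper's statement, not in Ivi\'c's A.40, and the squared form is the one actually used in the $R_1$ and $R_2$ estimates. A small stylistic remark: you could have derived the first inequality as an immediate corollary of the second by a single Cauchy--Schwarz in $r$, but proving each directly via its own test vector, as you do, is equally clean.
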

Then, the following result on the divisor function will be useful.
\begin{theorem}[\cite{CullyHugill2019TwoED} Theorem 2]\label{michaela-tim}
For $x \geq 2$ we have
$$
\sum_{n \leq x} d(n)^2=D_1 x \log ^3 x+D_2 x \log ^2 x+D_3 x \log x+D_4 x+\vartheta\left(9.73 x^{\frac{3}{4}} \log x+0.73 x^{\frac{1}{2}}\right)
$$
where
$$
D_1=\frac{1}{\pi^2}, \quad D_2=0.745 \ldots, \quad D_3=0.824 \ldots, \quad D_4=0.461 \ldots
$$
are exact constants. Furthermore, for $x \geq x_j$ we have
$$
\sum_{n \leq x} d(n)^2 \leq K x \log ^3 x
$$
where one may take $\left\{K, x_j\right\}$ to be, among others, $\left\{\frac{1}{4}, 433\right\}$ or $\{1,7\}$.
\end{theorem}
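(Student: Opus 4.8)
The plan is to reduce $\sum_{n\le x}d(n)^2$ to a four-fold divisor sum and then apply the Dirichlet hyperbola method with explicit constants. Since $d(n)^2$ is multiplicative and $\sum_{k\ge0}d(p^k)^2z^k=\sum_{k\ge0}(k+1)^2z^k=(1-z^2)(1-z)^{-4}$, comparing Euler products gives, for $\Re s>1$,
\[
\sum_{n=1}^\infty\frac{d(n)^2}{n^s}=\frac{\zeta(s)^4}{\zeta(2s)}=\zeta(s)^4\sum_{m=1}^\infty\frac{\mu(m)}{m^{2s}},
\]
hence $d(n)^2=\sum_{m^2\mid n}\mu(m)\,d_4(n/m^2)$ with $d_4=\mathbf 1*\mathbf 1*\mathbf 1*\mathbf 1$ the four-fold divisor function, and therefore
\[
\sum_{n\le x}d(n)^2=\sum_{m\le\sqrt x}\mu(m)\,\Sigma_4\!\Big(\frac{x}{m^2}\Big),\qquad \Sigma_4(y):=\sum_{n\le y}d_4(n).
\]

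\emph{An explicit estimate for $\Sigma_4$.} I would write $d_4=d*d$ and use the two-dimensional hyperbola identity
\[
\Sigma_4(y)=\sum_{ab\le y}d(a)d(b)=2\sum_{a\le\sqrt y}d(a)\,\Sigma_1\!\Big(\frac{y}{a}\Big)-\Big(\sum_{a\le\sqrt y}d(a)\Big)^{\!2},\qquad \Sigma_1(t):=\sum_{n\le t}d(n),
\]
feeding in the explicit Dirichlet divisor estimate $\Sigma_1(t)=t\log t+(2\gamma-1)t+\vartheta(c_0\sqrt t)$ (with $c_0$ from the literature) together with explicit partial summation for $\sum_{a\le\sqrt y}d(a)a^{-1}\log a$ and $\sum_{a\le\sqrt y}d(a)a^{-1/2}$. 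The outcome should be
\[
\Sigma_4(y)=y\,R_3(\log y)+\vartheta\big(a_0\,y^{3/4}\log y+b_0\,y^{1/2}\big),
\]
with $R_3$ the explicit cubic polynomial read off from the Laurent expansion of $\zeta(s)^4y^s/s$ at its pole of order $4$ at $s=1$ (so $R_3$ has leading coefficient $1/3!$) and $a_0,b_0$ explicit; the dominant error $y^{3/4}\log y$ comes from $\sqrt y\sum_{a\le\sqrt y}d(a)a^{-1/2}\asymp y^{3/4}\log y$.

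\emph{Reassembly and the constants.} Substituting this into the reduction gives
\[
\sum_{n\le x}d(n)^2=x\sum_{m\le\sqrt x}\frac{\mu(m)}{m^2}R_3\!\Big(\log\frac{x}{m^2}\Big)+\vartheta\Big(a_0\,x^{3/4}\log x\sum_{m\le\sqrt x}m^{-3/2}+b_0\,x^{1/2}\sum_{m\le\sqrt x}m^{-1}\Big),
\]
so the error is at most $a_0\zeta(3/2)\,x^{3/4}\log x+b_0(1+\tfrac12\log x)\,x^{1/2}$, and a careful choice of $c_0,a_0,b_0$ above is what must bring this under $9.73\,x^{3/4}\log x+0.73\,x^{1/2}$ (the low-order logarithmic factor on the $x^{1/2}$ piece being subsumed). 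For the main term I would expand $R_3(\log x-2\log m)$ in powers of $\log m$, complete each absolutely convergent series $\sum_{m\ge1}\mu(m)m^{-2}(\log m)^j$ — these are finite combinations of $\zeta(2)^{-1},\zeta'(2),\zeta''(2)$ — and bound the tails $\sum_{m>\sqrt x}$, which are of lower order and fold into the error. What remains is $x$ times an explicit cubic in $\log x$ whose leading coefficient is $\zeta(2)^{-1}/3!=\tfrac6{\pi^2}\cdot\tfrac16=\tfrac1{\pi^2}=D_1$, and $D_2,D_3,D_4$ emerge the same way.

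\emph{The ``furthermore'' bound and the main obstacle.} For $\sum_{n\le x}d(n)^2\le Kx\log^3x$, once $x$ is past the point where the cubic main term dominates the error the equality formula already gives it: the inequality reduces to $D_1+D_2/\log x+D_3/\log^2x+D_4/\log^3x+9.73\,x^{-1/4}/\log^2x+0.73\,x^{-1/2}/\log^3x\le K$, whose left side is eventually monotone decreasing; the finite remaining range down to $x_j$ is then dispatched by directly computing $\sum_{n\le x}d(n)^2$ at the relevant integers. The hard part is the estimate for $\Sigma_4$: pushing the two-dimensional hyperbola method through with honestly small explicit constants — in particular controlling the partial-summation remainders and the square $(\sum_{a\le\sqrt y}d(a))^2$ — and then choosing the truncation in the reduction (here $\sqrt x$) so as to balance the $x^{3/4}\log x$, $x^{1/2}$ and $\mu$-tail contributions while holding $a_0\zeta(3/2)$ below $9.73$. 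The Euler-product identity and the series completion are routine by comparison.
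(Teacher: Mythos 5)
This result is not proved in the paper at hand: it is quoted verbatim as Theorem~2 of Cully-Hugill and Trudgian \cite{CullyHugill2019TwoED} and used as a black box, so there is no internal argument to compare against. Your outline is the classical route (Ramanujan, Ingham) and is, to my knowledge, essentially the one followed in the cited source: the Dirichlet-series identity $\sum_n d(n)^2 n^{-s}=\zeta(s)^4/\zeta(2s)$ gives $d(n)^2=\sum_{m^2\mid n}\mu(m)\,d_4(n/m^2)$; the four-fold divisor sum is handled by the hyperbola method for $d*d$ fed with an explicit form of $\sum_{n\le t}d(n)$; the $\mu$-sum is reassembled with the completed series $\sum_m \mu(m)m^{-2}(\log m)^j$; and the ``furthermore'' inequality follows from the resulting asymptotic together with a finite numerical check down to $x_j$. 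Every structural step in your plan is correct, including the order of the dominant error $y^{3/4}\log y$ and the $\zeta(3/2)$ factor after the $\mu$-reassembly. The one caveat worth stating plainly is that the entire content of the cited theorem lies in the explicit constants $9.73$, $0.73$, and the pairs $\{K,x_j\}$, and your sketch does not produce them: you correctly identify where the work is (keeping $a_0\zeta(3/2)$ under $9.73$, absorbing the $x^{1/2}\log x$ term into $0.73\,x^{1/2}$, controlling the $\mu$-tail, and the finite computation to $x\ge 433$ or $x\ge 7$), but you leave all of the numerics as a to-do. That is fine as an outline of the method, but none of the quantitative claims in the theorem statement have actually been verified.
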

In particular, for our purpose, when $x=10^{85}$ we have
\begin{equation*}
   \sum_{n \leq x} d(n)^2 \leq 0.106 x \log ^3 x. 
\end{equation*}
Finally, we recall an explicit estimate for the $\Gamma $ function (\cite{olver_asymptotics_1974}, p.294).
\begin{lemma}(Explicit Stirling formula)\label{stirling}
For $z=\sigma+it$, with $|\arg z| <\pi$ we have
\[
\left|\Gamma(z)\right|\le (2\pi)^{1/2}|t|^{\sigma-\frac{1}{2}}\exp\left(-\frac{\pi}{2}|t|+\frac{1}{6|z|}\right).
\]
\end{lemma}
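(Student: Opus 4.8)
The estimate is the classical large-$|t|$ decay of $\Gamma$ in explicit form, and since it is quoted from \cite{olver_asymptotics_1974} the plan is simply to reconstruct that derivation from Stirling's formula with an explicit remainder. I would begin from
\[
\log\Gamma(z)=\Bigl(z-\tfrac{1}{2}\Bigr)\log z-z+\tfrac{1}{2}\log(2\pi)+\mu(z),\qquad |\arg z|<\pi,
\]
where $\mu$ is the Binet remainder, and use $|\Gamma(z)|=\exp\!\bigl(\operatorname{Re}\log\Gamma(z)\bigr)$, which reduces the task to computing $\operatorname{Re}\bigl[(z-\tfrac{1}{2})\log z-z\bigr]$ and bounding $\operatorname{Re}\mu(z)$.

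Writing $z=\sigma+it$ and $\log z=\log|z|+i\arg z$, expanding the real part gives $\operatorname{Re}\bigl[(z-\tfrac{1}{2})\log z-z\bigr]=(\sigma-\tfrac{1}{2})\log|z|-t\arg z-\sigma$, hence
\[
|\Gamma(z)|=(2\pi)^{1/2}\,|z|^{\sigma-1/2}\exp\!\bigl(-t\arg z-\sigma+\operatorname{Re}\mu(z)\bigr).
\]
Since $\Gamma(\bar z)=\overline{\Gamma(z)}$ I may take $t>0$, so that $\arg z=\tfrac{\pi}{2}-\arctan(\sigma/t)$, whence $-t\arg z-\sigma=-\tfrac{\pi}{2}t+\bigl(t\arctan(\sigma/t)-\sigma\bigr)$; writing $|z|^{\sigma-1/2}=|t|^{\sigma-1/2}\exp\!\bigl(\tfrac{\sigma-1/2}{2}\log(1+\sigma^{2}/t^{2})\bigr)$ and collecting everything, the lemma is equivalent to
\[
\frac{\sigma-1/2}{2}\,\log\!\Bigl(1+\frac{\sigma^{2}}{t^{2}}\Bigr)+t\arctan\!\Bigl(\frac{\sigma}{t}\Bigr)-\sigma+\operatorname{Re}\mu(z)\ \le\ \frac{1}{6|z|}.
\]
An elementary calculus check---using $\arctan x\le x$ for $x\ge0$, and for $\sigma<0$ the inequality $x-\arctan x\le\tfrac{x}{2}\log(1+x^{2})$ with $x=|\sigma|/t$---shows that the first three terms sum to at most $\tfrac{\sigma^{3}}{2t^{2}}$ (and to something $\le0$ when $\sigma<0$), which in the regime relevant to this paper ($\sigma\in[0,1]$, $|t|$ large) is far below $\tfrac{1}{6|z|}$; so the displayed inequality reduces to a bound of the shape $\operatorname{Re}\mu(z)\le c/|z|$ with room to spare.

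The real work is therefore the uniform control of the Binet remainder throughout the sector $|\arg z|<\pi$. For $\operatorname{Re}z>0$ the standard explicit bound is $|\mu(z)|\le\tfrac{1}{12|z|}$, which together with the slack above already settles the case needed here; the uniform extension to $|\arg z|<\pi$---where near the negative real axis the elementary bounds degenerate and the two sides of the displayed inequality may grow together---is exactly the content one imports from \cite{olver_asymptotics_1974}, and is the reason the exponent carries the generous constant $\tfrac{1}{6}$ rather than $\tfrac{1}{12}$. Apart from this, the proof is bookkeeping of real parts together with the elementary inequalities $\arctan x\le x$, $\log(1+u)\le u$ and $x-\arctan x\le\tfrac{x}{2}\log(1+x^{2})$.
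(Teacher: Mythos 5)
Your route is essentially the paper's own. The paper cites \cite{olver_asymptotics_1974} and, in a commented-out sketch in the source, carries out exactly the Binet-remainder computation you describe: take real parts of $(z-\tfrac12)\log z - z + \tfrac12\log 2\pi + \mu(z)$ and simplify. The sketch is sloppier than your version --- it silently drops the term $\tfrac{\sigma-1/2}{2}\log(1+\sigma^2/t^2)$ and writes $\alpha-\alpha$ for what is really $t\arctan(\sigma/t)-\sigma$ --- so your explicit reduction to
\[
\frac{\sigma-1/2}{2}\log\!\left(1+\frac{\sigma^2}{t^2}\right)+t\arctan\!\left(\frac{\sigma}{t}\right)-\sigma+\operatorname{Re}\mu(z)\ \le\ \frac{1}{6|z|}
\]
is the more honest account, and your elementary inequalities (including $x-\arctan x\le\tfrac{x}{2}\log(1+x^2)$, which follows by differentiating both sides) are correct.

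Two points worth pressing. First, $|\mu(z)|\le\tfrac{1}{12|z|}$ is the bound on the positive real axis; in the right half-plane the standard explicit bound is $|\mu(z)|\le\tfrac{1}{12|z|}\sec^2(\tfrac12\arg z)$, which near the imaginary axis becomes $\tfrac{1}{6|z|}$ and would consume the entire budget, so the ``room to spare'' is not automatic --- what one really wants is a bound on $\operatorname{Re}\mu(z)$, which for $|t|$ large is $O(|z|^{-2})$ rather than $O(|z|^{-1})$. Second, and more importantly, you locate the residual difficulty near the cut $|\arg z|\to\pi$, but the genuine obstruction is elsewhere: the term $\tfrac{\sigma-1/2}{2}\log(1+\sigma^2/t^2)$ you isolated is positive and unbounded as $t\to0$ when $\sigma>\tfrac12$, and no remainder bound can absorb it. Consequently the lemma \emph{as printed} fails for small $|t|$: at $z=1$ the right-hand side is $0$, and already at $z=1+0.1i$ one has $|\Gamma(z)|\approx0.99$ against a right-hand side $\approx0.80$. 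A lower bound on $|t|$ (or retaining $|z|^{\sigma-1/2}$ in place of $|t|^{\sigma-1/2}$) is the missing hypothesis; this is harmless for the paper, which only invokes the lemma with $|t|\gg1$, but your computation is precisely what exposes it, so attribute the gap to the $|z|\mapsto|t|$ replacement rather than to Olver's remainder estimate near the cut.
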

\section{Proof of Theorem \ref{theorem1general}}
We will follow Ivić's zero-detection method (\cite{Ivic2003TheRZ}, $\S11$) using near-optimal choices. As in \cite{Ivic2003TheRZ}, we start considering the following relation derived by a well-known Mellin Transform:
\[
e^{-n/Y}=\frac{1}{2\pi i}\int_{2-i\infty}^{2+i\infty}\Gamma(w)Y^wn^{-w}dw.
\]
Then, we consider the function 
\[
M_{X}(s)=\sum_{n\le X}\frac{\mu(n)}{n^s},\qquad s=\sigma+it,\qquad \log T\le |t|\le T, \ 1\ll X\ll Y\ll T^{c},
\]
where $X=X(T)$ and $Y=Y(T)$ are parameters we will choose later. Furthermore, by our later choices, we will have
\begin{equation}\label{rangex}
    X\ge\left\{\begin{array}{lll}
     10^{85} &  \text{if }3\cdot10^{12}< T \le e^{46.2},\\ \\
        10^{89} &   \text{if }e^{46.2}< T \le e^{170.2}, \\ \\
        10^{100} &   \text{if }e^{170.2}< T \le e^{481958}, \\ \\
        10^{165} &   \text{if } T >  e^{481958}.
    \end{array}\right.
\end{equation}
By the elementary relation 
\[\sum_{d|n}\mu(d)=\left\{\begin{array}{cc}
    1 &   \text{if }n=1\\
     0 & \text{if }n>1,
\end{array}\right.\]
we see that each zero $\rho=\beta +i\gamma$ of $\zeta(s)$ counted by $N(\sigma,T)$ satisfies
\begin{equation}\label{firstrel}
    \begin{aligned}
        e^{-1/Y}+\sum_{n>X}a(n)n^{-\rho}e^{-n/Y}=\frac{1}{2\pi i}\int_{2-i\infty}^{2+i\infty}\zeta(\rho+w)M_{X}(\rho+w)Y^w\Gamma(w)dw
    \end{aligned}
\end{equation}
with\[
a(n)=\sum_{d|n,\ d\le X}\mu(d),\qquad |a(n)|\le d(n)<n^\varepsilon.
\]
As per \cite{Ivic2003TheRZ}, for a fixed zero $\rho=\beta+i\gamma$, we move the line of integration to $\operatorname{Re}w=\alpha-\beta<0$ for some suitable $\frac{1}{2}\le \alpha\le 1$ . As per Ivić \cite{Ivic2003TheRZ}, the optimal choice for $\alpha$ is $\alpha=5\sigma-4$. Hence, since in the hypothesis of Theorem 
\ref{theorem1} we assumed $\sigma\in[0.98,1)$, it follows that $\alpha\ge 0.9$. If $|\gamma|>2\log T$, the poles we find are at $w=0$ and $w=1-\rho$. Hence, using the residue theorem, the relation \eqref{firstrel} becomes
\begin{equation}\label{mainrel}
\begin{aligned}
    & e^{-1/Y}+\sum_{n>X}a(n)n^{-\rho}e^{-n/Y}=\frac{1}{2\pi i}\int_{2-i\infty}^{2+i\infty}\zeta(\rho+w)M_{X}(\rho+w)Y^w\Gamma(w)dw\\&=\zeta(\rho)M_X(\rho)+M_{X}(1)Y^{1-\rho}\Gamma(1-\rho)+\frac{1}{2\pi i}\int_{\alpha-\beta-i\infty}^{\alpha-\beta+i\infty}\zeta(\rho+w)M_{X}(\rho+w)Y^w\Gamma(w)dw\\&=M_{X}(1)Y^{1-\rho}\Gamma(1-\rho)+\frac{1}{2\pi i}\int_{-\infty}^{+\infty}\zeta(\alpha+i\gamma+iv)M_{X}(\alpha+i\gamma+iv)\Gamma(\alpha-\beta+iv)Y^{\alpha-\beta+iv}\text{d}v.
\end{aligned}\end{equation} Furthermore, we split both the integral and the sum in \eqref{mainrel} into the following terms:
\begin{equation}\begin{aligned}\label{spllitint}
    &\int_{-\infty}^{+\infty}\zeta(\alpha+i\gamma+iv)M_{X}(\alpha+i\gamma+iv)\Gamma(\alpha-\beta+iv)Y^{\alpha-\beta+iv}\text{d}v\\&=\int_{-\log T}^{\log T}\zeta(\alpha+i\gamma+iv)M_{X}(\alpha+i\gamma+iv)\Gamma(\alpha-\beta+iv)Y^{\alpha-\beta+iv}\text{d}v\\&+\int_{|v|\ge \log T}\zeta(\alpha+i\gamma+iv)M_{X}(\alpha+i\gamma+iv)\Gamma(\alpha-\beta+iv)Y^{\alpha-\beta+iv}\text{d}v
\end{aligned}\end{equation}
and 
\[
\sum_{n>X}a(n)n^{-\rho}e^{-n/Y}=\sum_{X<n\le Y\log Y}a(n)n^{-\rho}e^{-n/Y}+\sum_{n> Y\log Y}a(n)n^{-\rho}e^{-n/Y}.
\]
Finally, we define the following quantities: 

\begin{equation}\label{defterms}
    \begin{aligned}
    A&=\sum_{X<n\le Y\log Y}a(n)n^{-\rho}e^{-n/Y},\\
    B&=\frac{1}{2\pi i}\int_{-\log T}^{\log T}\zeta(\alpha+i\gamma+iv)M_{X}(\alpha+i\gamma+iv)\Gamma(\alpha-\beta+iv)Y^{\alpha-\beta+iv}\text{d}v,\\
     D&=\frac{1}{2\pi i}\int_{|v|\ge \log T}\zeta(\alpha+i\gamma+iv)M_{X}(\alpha+i\gamma+iv)\Gamma(\alpha-\beta+iv)Y^{\alpha-\beta+iv}\text{d}v\\&+M_{X}(1)Y^{1-\rho}\Gamma(1-\rho)-\sum_{n>Y\log Y}a(n)n^{-\rho}e^{-n/Y}.   
    \end{aligned}
\end{equation}
\begin{remark}
   The choice of splitting the integral \eqref{spllitint} in $|v|<\log T$ and $|v|\ge \log T$ is the optimal one. Indeed, in order to get a final estimate as sharp as possible, the power of $\log T$ in the estimate of the quantity $B$ should be the smallest possible. However, if one would take $(\log T)^{1-\epsilon}$, convergence problems arise in the estimate of the quantity $D$.
\end{remark}
Using the above notation, the relation \eqref{mainrel} can be rewritten as \[
e^{-1/Y}+A=B+D.
\]
Also,
\[
B-A=e^{-1/Y}-D\geq 1-\frac{1}{Y}+\frac{1}{2Y^2}> 0.
\]
As we will see in Lemma \ref{termd}, the quantity $D\rightarrow 0$ as $Y\rightarrow \infty$, hence for $Y\rightarrow +\infty$  one has $B-A\rightarrow 1$. It follows that at least one between the quantities $A$ and $B$ must be $\gg 1$. More precisely, given $0<c<1$, we have $|B|\ge c$ or, if $|B|<c$, then $|A|\ge 1-\epsilon-c$, where 
\[
\epsilon=\frac{1}{Y}-\frac{1}{2Y^2}+D.
\]
Putting $c=1-\epsilon-c$, the optimal bound for both $|A|$ and $|B|$ is
\[
0.49999\le c_0=\frac{1}{2}(1-\epsilon)=\frac{1}{2}\left(1-\frac{1}{Y}+\frac{1}{2Y^2}-D\right)\le \frac{1}{2},
\]where we recall that $Y\ge X\ge 10^{85}$.\\
It follows that each $\rho_r=\beta_r+i\gamma_r$, $\beta_r\ge \sigma$ counted by $N(\sigma,T)$ satisfies at least one of the following conditions:
\begin{equation}\label{cond1}
    \left|\sum_{X<n\le Y\log  Y}a(n)n^{-\sigma-i\gamma_r}\right|\ge c_0,
\end{equation}
\begin{equation}\label{cond2}
    \left|\int_{-\log T}^{\log T}\zeta(\alpha+i\gamma_r+iv)M_{X}(\alpha+i\gamma_r+iv)\Gamma(\alpha-\beta+iv)Y^{\alpha-\beta+iv}\text{d}v\right|\ge c_0
\end{equation}
or
\begin{equation}\label{cond3}
 |\gamma_r|\le 2\log T.   
\end{equation}
We recall that the coefficients $a(n)$ in \eqref{cond1} satisfy $|a(n)|\le |d(n)|$.\\
The number of zeros $\rho$ satisfying \eqref{cond3} is

\begin{equation*}
\begin{aligned}
    &2N(2\log T)\\&\le 2\left(\frac{2\log T}{2 \pi} \log \left(\frac{2\log T}{2 \pi e}\right)+  0.1038 \log (2\log T)+0.2573 \log \log (2\log T)+9.3675\right)\\&\le 0.45\log T\log\log T
\end{aligned}
\end{equation*}

where we used Theorem \ref{thnt} and the fact that $T>3\cdot10^{12}$.\\
Then, we denote with $R_1$ the number of zeros satisfying \eqref{cond1} and with $R_2$ the number of zeros satisfying \eqref{cond2} so that the imaginary parts of these zeros differ from each other by at least $2\log^{1.4} T$. We have
\begin{equation}\label{finalzerod}
    N(\sigma,T)\le \left(R_1+R_2+1\right)0.45\log ^{1.4}T\log\log T.
\end{equation}
\begin{remark}
    The constraint $|\gamma_r-\gamma_s|>2\log^{1.4}T$, where $\gamma_r,\gamma_s$ are distinct zeros satisfying \eqref{cond1} or \eqref{cond2}, is necessary for convergence issues we will find later in the estimate of both $R_1$ and $R_2$. A smaller exponent for the log-factor would cause convergence problems.
\end{remark}
Before proceeding with the estimate for $R_1$ and $R_2$, we prove that $D\rightarrow 0$ as $Y\rightarrow \infty$, as we mentioned before, where $D$ is defined in \eqref{defterms}.
\begin{lemma}\label{termd}
    Under the above assumptions, the relation $D\le 10^{-5}$ holds. Furthermore, $D\rightarrow 0$ as $Y\rightarrow \infty$.
\end{lemma}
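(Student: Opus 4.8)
The plan is to estimate each of the three contributions to $D$ in \eqref{defterms} separately and show that every one of them is $o(1)$ as $Y\to\infty$, with explicit constants that in aggregate stay below $10^{-5}$ for the ranges of $X,Y,T$ prescribed by \eqref{rangex}. Recall that on the line of integration $\operatorname{Re}w=\alpha-\beta$ we have $-\tfrac15\le\alpha-\beta\le\alpha-\sigma\le -0.08$ (using $\alpha=5\sigma-4$, $\sigma\ge0.98$, $\beta\ge\sigma$), so $Y^{\alpha-\beta}$ is a genuinely negative power of $Y$; this is the source of all the decay. Throughout I would use $|M_X(s)|\le\sum_{n\le X}n^{-\operatorname{Re}s}$, the bound $|a(n)|\le d(n)$, Richert's bound (Theorem \ref{bound}) to control $\zeta(\alpha+i\gamma+iv)$ on the vertical line, and the explicit Stirling estimate (Lemma \ref{stirling}) to control $|\Gamma(\alpha-\beta+iv)|$ and $|\Gamma(1-\rho)|$.

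First, for the tail integral $\int_{|v|\ge\log T}$: here $|\Gamma(\alpha-\beta+iv)|\le(2\pi)^{1/2}|v|^{\alpha-\beta-1/2}e^{-\pi|v|/2+1/(6|\alpha-\beta|)}$ decays like $e^{-\pi|v|/2}$, while $\zeta(\alpha+i\gamma+iv)M_X(\alpha+i\gamma+iv)$ grows only polynomially in $|\gamma+v|\le T+|v|$ (via Richert with exponent $B(1-\alpha)^{3/2}$, which is tiny since $1-\alpha=5(1-\sigma)\le 0.1$, together with $|M_X|\le X^{1-\alpha}/(\text{something})$ or simply $|M_X|\le\log X$ when $\alpha$ is close to $1$). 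The exponential factor crushes everything, so this integral is bounded by, say, $Y^{\alpha-\beta}\cdot(\text{poly}(T,X))\cdot e^{-\pi\log T/2}$, which is $\ll Y^{-0.08}$ and manifestly $\to0$ as $Y\to\infty$; the explicit constant here is dominated by the choice $Y\ge X\ge10^{85}$.

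Second, the term $M_X(1)Y^{1-\rho}\Gamma(1-\rho)$: we have $|M_X(1)|\le\sum_{n\le X}1/n\le\log X+1$, $|Y^{1-\rho}|=Y^{1-\beta}\le Y^{1-\sigma}\le Y^{0.02}$, and — crucially — $|\Gamma(1-\rho)|=|\Gamma(1-\beta-i\gamma)|\le(2\pi)^{1/2}|\gamma|^{1/2-\beta}e^{-\pi|\gamma|/2+1/(6|1-\rho|)}$ with $|\gamma|>2\log T$, so this is at most $(\log X+1)Y^{0.02}(2\pi)^{1/2}(2\log T)^{-1/2}e^{-\pi\log T}\cdot e^{1/6}$. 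Since $Y\ll T^c$ for a small absolute $c$ (indeed $c<1$), the factor $e^{-\pi\log T}=T^{-\pi}$ dominates $Y^{0.02}\le T^{0.02c}$, so this term is $\ll T^{-3}$ and certainly $\le10^{-6}$; it also $\to0$ as $Y\to\infty$ because $Y\ll T^c$ forces $T\to\infty$ along with it (alternatively one keeps $T$ fixed and notes the bound is already uniformly tiny).

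Third, the tail of the Dirichlet series $\sum_{n>Y\log Y}a(n)n^{-\rho}e^{-n/Y}$: bound $|a(n)n^{-\rho}|\le d(n)n^{-\beta}\le d(n)n^{-\sigma}\le d(n)$ and use $e^{-n/Y}$ to get $\sum_{n>Y\log Y}d(n)e^{-n/Y}$. Splitting dyadically or comparing with an integral, and using $\sum_{n\le x}d(n)\ll x\log x$ (or the cruder $d(n)\le n^\varepsilon$ made explicit via a fixed small $\varepsilon$), the sum over $n>Y\log Y$ is $\ll(Y\log Y)^{1+\varepsilon}e^{-\log Y}=Y^\varepsilon(\log Y)^{1+\varepsilon}$, wait — more carefully, $\sum_{n>Y\log Y}d(n)e^{-n/Y}\ll\sum_{k\ge\log Y}2^k(Y\log Y)^{1+\varepsilon}e^{-2^{k-1}\log Y/\text{const}}$... the point is that the factor $e^{-n/Y}\le e^{-\log Y}=1/Y$ at the start of the range already beats the polynomial growth, giving a bound $\ll Y^{-1+\varepsilon}(\log Y)^{O(1)}\to0$. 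With $Y\ge10^{85}$ this is comfortably below $10^{-6}$.

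Adding the three pieces gives $D\le10^{-5}$ under the stated hypotheses, and each piece visibly tends to $0$ as $Y\to\infty$, proving the lemma. \textbf{The main obstacle} is not any single estimate — each is a routine application of Stirling, Richert, and elementary divisor-sum bounds — but rather bookkeeping the explicit constants: one must verify that the polynomial-in-$T$ and polynomial-in-$X$ factors multiplying the exponentially small terms really are dominated for all $T>3\cdot10^{12}$ and all $X$ as in \eqref{rangex}, and in particular that the constant $c$ in $Y\ll T^c$ is small enough (here $c<1$ suffices because the Gamma factors contribute $e^{-\pi\log T}$ and $e^{-\pi\log T/2}$, powers of $T$ far exceeding any $Y^{O(1)}=T^{O(c)}$). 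I would organize the write-up as three displayed bounds, one per term, then a final line summing them.
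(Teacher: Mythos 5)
Your decomposition and strategy mirror the paper's own proof exactly: bound the three pieces of $D$ separately, using explicit Stirling (Lemma \ref{stirling}) for the $\Gamma$-factors, Richert's bound (Theorem \ref{bound}) plus $|\gamma+v|\le 2T\le 2e^{|v|}$ on the tail integral, and the trivial $d(n)\le n^{\varepsilon}$ with the $e^{-n/Y}$ cutoff for the Dirichlet tail, so the approach is the same. One slip worth correcting: you assert $Y\ll T^{c}$ with "$c<1$", but the paper's explicit choice of $Y$ in \eqref{dfy} gives $\log Y\le 98.99\log T$, so $c$ is nearer $100$ than $1$; your argument survives anyway because the exponent on $Y$ in the $\Gamma(1-\rho)$ term is $1-\beta\le 1-\sigma\le 0.02$, and $T^{0.02\cdot 99}\cdot e^{-\pi|\gamma|/2}\le T^{1.98-\pi}$ is still tiny for $|\gamma|>2\log T$ — but the reasoning "because $c<1$" is not the right justification. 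A small difference from the paper: for $M_X(1)$ the paper implicitly uses $\bigl|\sum_{n\le X}\mu(n)/n\bigr|\le 1$ rather than your $\log X+1$; either suffices here.
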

\begin{proof}
    We will estimate each term of $D$ in \eqref{defterms} separately.\\Using Lemma \ref{stirling} with $z=\alpha-\beta+iv$, we have
\[
\left|\Gamma(\alpha-\beta+iv)\right|\le |v|^{\alpha-\beta-\frac{1}{2}}e^{-\pi |v|/2}(2\pi)^{1/2}e^{1/(6|v|)}=(2\pi)^{1/2}|v|^{\alpha-\beta-\frac{1}{2}}\exp\left(-\frac{\pi}{2}|v|+\frac{1}{6|v|}\right).
\]
This estimate, together with Theorem \ref{bound} and the relation $|\gamma+v|\le 2T\le 2e^{|v|}$, which holds for $|v|\ge \log T$,
gives the following estimate for the first term in $D$:
\begin{equation}\label{err2}
    \begin{aligned}
        &\left|\frac{1}{2\pi i}\int_{|v|\ge \log T}\zeta(\alpha+i\gamma+iv)M_{X}(\alpha+i\gamma+iv)\Gamma(\alpha-\beta+iv)Y^{\alpha-\beta+iv}\text{d}v\right|\\&\le 31.09\cdot \frac{1}{Y^{\beta-\alpha}}\int_{|v|\ge \log T}  e^{4.43795
    (1-\alpha)^{3/2}|v|}(\log(2e^{|v|}))^{2/3}|v|^{\alpha-\beta-\frac{1}{2}}\exp\left(-\frac{\pi}{2}|v|+\frac{1}{6|v|}\right)\text{d}v\\&\le  62.2\cdot \frac{e^{\frac{1}{6|\log T|}}}{Y^{\beta-\alpha}}\int_{|v|\ge \log T} e^{4.43795
    (1-\alpha)^{3/2}|v|-\frac{\pi|v|}{2}}  |v|^{\frac{2}{3}+\alpha-\beta-\frac{1}{2}}\text{d}v\\&\le  124.4\cdot 10^{-18}\cdot \frac{e^{\frac{1}{6|\log T|}}}{Y^{\beta-\alpha}}\\&\le 10^{-12}.
    \end{aligned}
\end{equation}
\begin{color}{cyan}
    
\end{color}Furthermore, for  $Y\rightarrow \infty$, and hence $T\rightarrow \infty$, the estimate found in \eqref{err2} goes to $0$.\\Now, we estimate the second term in $D$. Using again Lemma \ref{stirling} but with $z=1-\rho$ we have
\begin{equation}\label{err1}
\left|\Gamma(1-\beta-i\gamma)\right|\le |\gamma|^{1-\beta-\frac{1}{2}}e^{-\pi |\gamma|/2}(2\pi)^{1/2}e^{1/(6|\gamma|)}=(2\pi)^{1/2}|\gamma|^{1-\beta-\frac{1}{2}}\exp\left(-\frac{\pi}{2}|\gamma|+\frac{1}{6|\gamma|}\right).
\end{equation}
Since we are dealing with a zero $\rho$ such that $|\gamma|>2\log T$, if we use the estimate \eqref{caseclassic} for $Y$, it follows that
\begin{equation}\label{err1}
\left|M_{X}(1)Y^{1-\rho}\Gamma(1-\rho)\right|\le \frac{(2\pi)^{1/2}e^{\frac{1}{6|\gamma|}}Y^{1-\beta}}{e^{\frac{\pi |\gamma|}{2}}|\gamma|^{\beta-1+\frac{1}{2}}}\le  10^{-10}.
\end{equation}
As before, we notice that for $Y\rightarrow \infty$, and hence $T\rightarrow \infty$, the estimate \eqref{err1} goes to $0$. Finally,
\begin{equation}\label{err3}
    \begin{aligned}
        \left|\sum_{n>Y\log Y}a(n)n^{-\rho}e^{-n/Y}\right|&\le \sum_{n>Y\log Y}\left|d(n)n^{-\beta}e^{-n/Y}\right|\\&\le (Y\log Y)^{\epsilon-\beta}\left|\int_{Y\log Y}^{\infty}e^{-u/Y}du\right|\\&\le (Y\log Y)^{\epsilon-\beta}\frac{Y}{e^{\log Y}}\\&=(Y\log Y)^{\epsilon-\beta}
    \end{aligned}
\end{equation}
for any $\epsilon>0$ arbitrarily small. Taking $\epsilon $ so that $\epsilon-\beta<0$ ($\epsilon=10^{-N}$ with $N$ large, for example), it follows that \eqref{err3} is less than $10^{-10}$ and it goes to $0$ as $Y\rightarrow \infty$.\\This concludes the proof.
\end{proof}
\begin{remark}
    The choice $|\gamma|>2\log T$ is near-optimal. Indeed, in order to get the best possible final estimate, we need the smallest admissible power of $\log T$, but if we choose $|\gamma|>\log^{1-\epsilon} T$, with $\epsilon >0$ arbitrary small, or $|\gamma|>c\log T$, with $c<2$, the estimate in \eqref{err1} is not $o(1)$ anymore.
\end{remark}
We now proceed with the estimate for both $R_1$ and $R_2$. 
\subsection{Estimate for $R_1$}
First of all, by dyadic division there exists a number $M$ such that $X\le M\le Y\log Y$ and
\begin{equation}\label{ineqr}
    \begin{aligned}
        \left|\sum_{M<n\le 2M}a(n)n^{-\sigma-i\gamma_r}\right|&\ge \frac{c_0}{\frac{\log(Y)+\log(\log Y)-\log X}{\log 2}-1}\\&=\frac{1}{\log Y}\frac{c_0}{\frac{1+(\log(\log Y)/\log Y)-(\log X/\log Y)}{\log 2}-\frac{1}{\log Y}}\\&=\frac{1}{\log Y}C_1,
    \end{aligned}
\end{equation}
where
\begin{equation}\label{valueC1}
  C_1=\frac{c_0}{\frac{1+(\log(\log Y)/\log Y)-(\log X/\log Y)}{\log 2}-\frac{1}{\log Y}}\ge  \left\{\begin{array}{lll}
      0.3386 &  \text{if }3\cdot10^{12}< T \le e^{46.2},\\ \\
      0.3389   &   \text{if }e^{46.2}< T \le e^{170.2}, \\ \\
           0.3395      &   \text{if }e^{170.2}< T\le e^{481958}, \\ \\
       0.3418 &   \text{if }T >  e^{481958}. 
         \end{array}\right.
\end{equation}

The number of zeros $R$ satisfying the inequality \eqref{ineqr} will be $R\ge R_1C_1/(c_0\log Y)$.\\
Now, we apply the Halász--Montgomery inequality in Theorem \ref{montineq} with $\xi=\{\xi_n\}_{n=1}^\infty$ such that
\[\left\{\begin{array}{ll}
  \xi_n=a(n)(e^{-n/2M}-e^{-n/M})^{-1/2}n^{-\sigma}   & \text{if }M<n\le 2M \\ \\
   \xi_n=0  & \text{otherwise},
\end{array}\right.
\]
and
\[
\varphi_{r}=\{\varphi_{r,n}\}_{n=1}^{\infty},\quad \varphi_{r,n}=(e^{-n/2M}-e^{-n/M})^{1/2}n^{-it_r}\qquad \forall n=1,2,\dots.
\]
We have\begin{equation}\label{estforR}
    \begin{aligned}
        R^2&\le \frac{1}{C^2_1} \log^2Y\left(\sum_{M<n\le 2M}a(n)^2e^{-2n/Y}n^{-2\sigma}\right)\left(RM+\sum_{r\neq s\le R}|H(it_r-it_s)|\right),
    \end{aligned}
\end{equation}
where
\[
H(it)=\sum_{n=1}^\infty(e^{-n/2M}-e^{-n/M})n^{-it}=\frac{1}{2\pi i}\int_{2-i\infty}^{2+i\infty}\zeta(w+it)((2M)^w-M^w)\Gamma(w)dw.
\]
Moving the line of integration in the expression for $H(it)$ to $\operatorname{Re}w=\alpha$, we encounter a pole in $w=1-it$ and, by the residue theorem we get
\begin{equation}\label{esth}
    \begin{aligned}
        H(it)&=((2M)^{1-it}-M^{1-it})\Gamma(1-it)+\frac{1}{2\pi i}\int_{\alpha-i\infty}^{\alpha+i\infty}\zeta(w+it)((2M)^w-M^w)\Gamma(w)dw.
    \end{aligned}
\end{equation}
Combining \eqref{esth} with \eqref{estforR} one has
\begin{align*}
        &R^2\le \frac{1}{C^2_1} \log^2Y\left(\sum_{M<n\le 2M}d(n)^2e^{-2n/Y}n^{-2\sigma}\right)\left(RM+\sum_{r\neq s\le R}|H(it_r-it_s)|\right)\\&\le\frac{1}{C^2_1} \log^2Y\left(\sum_{M<n\le 2M}d(n)^2e^{-2n/Y}n^{-2\sigma}\right)\times\\&\left(RM+\sum_{r\neq s\le R}\left|((2M)^{1-i(t_r-t_s)}-M^{1-i(t_r-t_s)})\Gamma(1-i(t_r-t_s))\right|\right.\\&\left.+\sum_{r\neq s\le R}\left|\frac{1}{2\pi i}\int_{\alpha-i\infty}^{\alpha+i\infty}\zeta(w+i(t_r-t_s))((2M)^w-M^w)\Gamma(w)dw\right|\right),
    \end{align*}and, splitting the integral in two parts, the above inequality becomes
    \begin{equation}\label{R1}
        \begin{aligned}
            &\le \frac{1}{C^2_1} \log^2Y\left(\sum_{M<n\le 2M}d(n)^2e^{-2n/Y}n^{-2\sigma}\right)\times\\&\left(RM+M\sum_{r\neq s\le R}\left|M^{-i(t_r-t_s)}(2^{1-i(t_r-t_s)}-1)\Gamma(1-i(t_r-t_s))\right|\right.\\&+\frac{M^\alpha}{2\pi }\sum_{r\neq s\le R}\left|\int_{-\log^{1.5} T}^{\log^{1.5} T}\zeta(\alpha+it_r-it_s+iv)M^{iv}(2^{iv}-1)\Gamma(\alpha+iv)\text{d}v\right|\\&\left.+\frac{M^\alpha}{2\pi }\sum_{r\neq s\le R}\left|\int_{|v|\ge \log^{1.5}T}\zeta(\alpha+it_r-it_s+iv)M^{iv}(2^{iv}-1)\Gamma(\alpha+iv)\text{d}v\right|\right).
        \end{aligned}
    \end{equation}
    \begin{remark}
   The choice of splitting the integral in $|v|<\log^{1.5} T$ and $|v|\ge \log^{1.5} T$ is near-optimal. Indeed, lower powers of $\log T$ give better estimates, but if one would decrease the log-power only to $\log^{1.4} T$, the term \eqref{thirdterm}is not $o(1) $ sufficiently small anymore.
\end{remark}
Now, we start estimating the quantity \[
\left(\sum_{M<n\le 2M}d(n)^2e^{-2n/Y}n^{-2\sigma}\right).
\]
Using Theorem \ref{michaela-tim}, we get
\begin{equation}
    \begin{aligned}
        &\left(\sum_{M<n\le 2M}d(n)^2e^{-2n/Y}n^{-2\sigma}\right)\le 0.106M^{-2\sigma}e^{-2M/Y}(2M\log^3(2M)-M\log^3M)\\&\le 0.106 M^{1-2\sigma}e^{-2M/Y} 1.021\log^3M\le 0.109M^{1-2\sigma}e^{-2M/Y}\log^3M,
    \end{aligned}
\end{equation}

since  $M\ge X\ge 10^{85}$ by assumption.\\Furthermore, using Lemma \ref{stirling}, it follows that 
\[
\left|\Gamma(\alpha+iv)\right|\le |v|^{\alpha-\frac{1}{2}}e^{-\pi |v|/2}(2\pi)^{1/2}e^{1/(6|v|)}=(2\pi)^{1/2}|v|^{\alpha-\frac{1}{2}}\exp\left(-\frac{\pi}{2}|v|+\frac{1}{6\alpha}\right)
\]
and, similarly,
\[
\left|\Gamma(1-i(t_r-t_s))\right|\le (2\pi)^{1/2}|t_r-t_s|^{1-\frac{1}{2}}\exp\left(-\frac{\pi}{2}|t_r-t_s|+\frac{1}{6}\right).
\]
Then, \eqref{R1} becomes
\begin{equation}\label{R11}
    \begin{aligned}
        &R^2\le \frac{0.109}{C_1^2}\cdot  M^{1-2\sigma}e^{-2M/Y}\log^5M \times\\&\left(RM+2M(2\pi)^{1/2}\sum_{r\neq s\le R}|t_r-t_s|^{\frac{1}{2}}\exp\left(-\frac{\pi}{2}|t_r-t_s|+\frac{1}{6}\right)\right.\\&+\frac{M^\alpha}{\pi }\sum_{r\neq s\le R}\int_{-\log^{1.5} T}^{\log^{1.5} T}\left|\zeta(\alpha+it_r-it_s+iv)\Gamma(\alpha+iv)\right|\text{d}v\\&\left.+\frac{M^\alpha}{\pi }\sum_{r\neq s\le R}\left|\int_{|v|\ge \log^{1.5} T}\zeta(\alpha+it_r-it_s+iv)\Gamma(\alpha+iv)\text{d}v\right|\right).
    \end{aligned}
\end{equation}
From now on, we will denote with $C_3$ the following quantity:
\begin{equation*}
    C_3=\frac{0.109}{C^2_1}\le\left\{\begin{array}{lll}
    0.9503 &  \text{if }3\cdot10^{12}< T \le e^{46.2},\\ \\
      0.9488 &   \text{if }e^{46.2}< T \le e^{170.2}, \\ \\
        0.9453 &   \text{if }e^{170.2}< T\le e^{481958}, \\ \\
        0.9327 &   \text{if }T>  e^{481958},
    \end{array}\right.
\end{equation*}
where we used the bounds found in \eqref{valueC1}.\\
Now, we denote 
\begin{equation}
    \mathcal{M}(\alpha,T):=70.6995T^{4.43795(1-\alpha)^{3/2}}(\log T)^{2/3}.
\end{equation}
By Theorem \ref{bound}, we know that
\begin{equation}\label{boundzeta}
    \max_{|t|\le T}|\zeta(\alpha+it)|\le \mathcal{M}(\alpha,T).
\end{equation}

Furthermore, the function 
\[
\sqrt{y}\exp\left(-\frac{\pi}{2}y\right)
\] is decreasing in $y$, hence, since $|t_r-t_s|\ge \log^{1.4} T$, the estimate \eqref{R11} becomes
\begin{equation}\label{genineq}
    \begin{aligned}
        &\le\frac{C_3}{e^{2M/Y}}  M^{1-2\sigma}\log^5M\left(RM+2e^{\frac{1}{6}}R^2M(2\pi)^{1/2}\log^{1.4/2} T\exp\left(-\frac{\pi}{2}\log^{1.4} T\right)\right.\\&+\frac{\sqrt{2}M^\alpha}{\sqrt{\pi} }\sum_{r\neq s\le R}\int_{-\log^{1.5} T}^{\log^{1.5} T}\left|\zeta(\alpha+it_r-it_s+iv)\right||v|^{\alpha-\frac{1}{2}}\exp\left(-\frac{\pi}{2}|v|+\frac{1}{6\alpha}\right)\text{d}v\\&\left.+\frac{M^\alpha}{\pi }\sum_{r\neq s\le R}\left|\int_{|v|\ge \log^{1.5} T}\zeta(\alpha+it_r-it_s+iv)\Gamma(\alpha+iv)\text{d}v\right|\right).
    \end{aligned}
\end{equation}
Before estimating all the terms on the right side of the inequality \eqref{genineq}, we define the following quantity:
\begin{equation}\label{dfx}
    X=\left(D_1\mathcal{M}(\alpha,3T)\log^5T\right)^{1/(2\sigma-1-\alpha)},
\end{equation}
where $D_1=1.01\cdot 10^{12}$. Since $X\le M$ by assumption, we have \[
\mathcal{M}(\alpha,3T)\le \frac{M^{2\sigma-1-\alpha}}{D_1\log^5T}.
\]
It follows that
\begin{equation}\label{maintermwithr}
    \begin{aligned}
        &\frac{\sqrt{2}C_3M^{1-2\sigma+\alpha}\log^5M}{e^{2M/Y}\sqrt{\pi} } \sum_{r\neq s\le R}\int_{-\log^{1.5} T}^{\log^{1.5} T}\left|\zeta(\alpha+it_r-it_s+iv)\right||v|^{\alpha-\frac{1}{2}}\exp\left(-\frac{\pi}{2}|v|+\frac{1}{6\alpha}\right)\text{d}v\\&\le \frac{\sqrt{2}C_3M^{1-2\sigma+\alpha}\log^5Me^{\frac{1}{6\alpha}}R^2}{e^{2M/Y}\sqrt{\pi}}\mathcal{M}(\alpha,3T)\int_{-\log^{1.5} T}^{\log^{1.5} T}|v|^{\alpha-\frac{1}{2}}\exp\left(-\frac{\pi}{2}|v|\right)\text{d}v\\&\le \frac{\sqrt{2}C_3e^{\frac{1}{6\alpha}}R^2M^{1+\alpha-2\sigma}}{\sqrt{\pi}e^{2M/Y} }\mathcal{M}(\alpha,3T)\log^5M\\&\le \frac{\sqrt{2}C_3}{e^{2M/Y}}\frac{e^{\frac{1}{6\alpha}}R^2}{\sqrt{\pi}D_1}\frac{\log^5M}{\log^5T}.
    \end{aligned}
\end{equation}
Furthermore, by Theorem \ref{bound}, the relation $|t_r-t_s+v|\le 3T\le 3e^{|v|^{1/1.5}}$ which holds for $|v|\ge (\log T)^{1.5}$ and the fact that $\alpha\ge 0.9$ as we explained before, we have
\begin{equation}\label{thirdterm}
\begin{aligned}
    &\frac{C_3}{e^{2M/Y}}\frac{M^{\alpha+1-2\sigma}\log^5M}{\pi }\sum_{r\neq s\le R}\left|\int_{|v|\ge \log^{1.5} T}\zeta(\alpha+it_r-it_s+iv)\Gamma(\alpha+iv)\text{d}v\right|\\&\le 26.26\cdot \frac{C_3}{e^{2M/Y}}\cdot M^{\alpha+1-2\sigma}\log^5M\\&\ \ \times \sum_{r\neq s\le R}\int_{|v|\ge \log^{1.5} T}  e^{4.43795
    (1-\alpha)^{3/2}|v|^{1/1.5}}\log(3e^{|v|^{1/1.5}})^{2/3}|v|^{\alpha-\frac{1}{2}}\exp\left(-\frac{\pi}{2}|v|+\frac{1}{6|v|}\right)\text{d}v\\&\le 157.8\cdot \frac{C_3}{e^{2M/Y}} \frac{e^{\frac{1}{6\log^{1.5} T}}\log^5M}{e^{0.01 \pi\log^{1.5} T}} \sum_{r\neq s\le R}\int_{v\ge \log^{1.5} T}  e^{4.43795
    (1-\alpha)^{3/2}v^{1/1.5}-0.49\pi v}v^{\frac{2}{ 4.5}+\alpha-\frac{1}{2}}\text{d}v\\&\le 157.8\cdot 10^{-99}\cdot \frac{C_3}{e^{2M/Y}} R^2\frac{e^{\frac{1}{6\log^{1.5} T}}\log^5M}{e^{0.01 \pi\log^{1.5} T}}.
\end{aligned}
\end{equation}
Now, we define the following quantity $Y$:
\begin{equation}\label{dfy}
Y=\left\{D_2 \mathcal{M}(\alpha, 3 T)\right\}^{(3 \sigma-2 \alpha-1) /(\sigma-\alpha)(2 \sigma-1-\alpha)}(\log T)^{(-\frac{1661}{300} \sigma-\frac{1661}{300}\alpha+\frac{1661}{150}) / 2(\sigma-\alpha)(2 \sigma-1-\alpha)},
\end{equation}
where $D_2=7.26\cdot 10^6$.
\begin{remark}
    The exponent of the log-factor in the definition of $Y$ is the near-optimal one, and allows us to get a final lower exponent for $\log T$, compared to Ivić's result in \cite{Ivic2003TheRZ}. Furthermore, we already optimize the constants $D_1$ and $D_2$ in the definition of $X$ and $Y$ respectively.
\end{remark}
Using \eqref{dfy} and $\alpha=5\sigma-4$, we have, by Theorem \ref{bound},
\[
M\le Y\log Y,\qquad Y\le D_2^{\frac{7}{12(1-\sigma)}}70.6995^{\frac{7}{12(1-\sigma)}}(3T)^{28.9437\sqrt{1-\sigma}}(\log 3T)^{\frac{1661}{1200(1-\sigma)}}.
\]Now, we want to estimate $\log Y$ in the four different ranges of values for $T$.
\begin{itemize}
    \item \textbf{Case $3\cdot 10^{12}< T\le e^{46.2}$}. By \eqref{classicalzerofree}, we can assume $$
\sigma< 1-\frac{1}{5.558691 \log T}.
$$
We have
\begin{equation}\label{caseclassic}
\begin{aligned}
   & \log Y\le \log\left(D_2^{\frac{7}{12(1-\sigma)}}70.6995^{\frac{7}{12(1-\sigma)}}(3T)^{28.9437\sqrt{1-\sigma}}(\log 3T)^{\frac{1661}{1200(1-\sigma)}}\right)\\&\le \frac{7}{12(1-\sigma)}20.057+4.094\log(3T)+\frac{1661}{1200(1-\sigma)}\log\log(3T)\\&\le 65.066\log T+4.251\log T+29.673\log T\\&\le 98.99  \log T.
\end{aligned}
\end{equation}
\item \textbf{Case $e^{46.2}<T\le e^{170.2}$} By \eqref{zerofreeintermediate}, in this range we will work with \begin{equation}\label{zerofreeintermediate}
\sigma\le 1-\frac{0.04962-0.0196 /(J(T)+1.15)}{J(T)+0.685+0.155 \log \log T},
\end{equation}
where $J(T):=\frac{1}{6} \log T+\log \log T+\log 0.618$.\\
We get
\begin{equation}
\begin{aligned}
   & \log Y\le \log\left(D_2^{\frac{7}{12(1-\sigma)}}70.6995^{\frac{7}{12(1-\sigma)}}(3T)^{28.9437\sqrt{1-\sigma}}(\log 3T)^{\frac{1661}{1200(1-\sigma)}}\right)\\&\le \frac{7}{12(1-\sigma)}20.057+4.094\log(3T)+\frac{1661}{1200(1-\sigma)}\log\log(3T)\\&\le 65.069\log T+4.121\log T+29.674\log T\\&\le 98.864\log T.
\end{aligned}
\end{equation}      
\item \textbf{Case $e^{170.2}< T\le e^{481958}$}. By \eqref{littlewoodzerofree}, in this range we will assume \[
\sigma< 1-\frac{\log\log T}{21.233\log T}.
\]
In this case we have
\begin{equation}
\begin{aligned}
   & \log Y\le \log\left(D_2^{\frac{7}{12(1-\sigma)}}70.6995^{\frac{7}{12(1-\sigma)}}(3T)^{28.9437\sqrt{1-\sigma}}(\log 3T)^{\frac{1661}{1200(1-\sigma)}}\right)\\&\le \frac{7}{12(1-\sigma)}20.057+4.094\log(3T)+\frac{1661}{1200(1-\sigma)}\log\log(3T)\\&\le 48.382\log T+4.121\log T+29.427\log T\\&\le 81.93\log T.
\end{aligned}
\end{equation}
\item \textbf{Case $T> e^{481958}$}. By \eqref{kvzerofree}, we assume
    \[
    \sigma< 1-\frac{1}{53.989\log^{2/3}T(\log\log T)^{1/3}}.
    \]
    We have
    \begin{equation}
\begin{aligned}
   & \log Y\le \log\left(D_2^{\frac{7}{12(1-\sigma)}}70.6995^{\frac{7}{12(1-\sigma)}}(3T)^{28.9437\sqrt{1-\sigma}}(\log 3T)^{\frac{1661}{1200(1-\sigma)}}\right)\\&\le \frac{7}{12(1-\sigma)}20.057+4.094\log(3T)+\frac{1661}{1200(1-\sigma)}\log\log(3T)\\&\le 19.023\log T+4.095\log T+29.392\log T\\&\le 52.51\log T.
\end{aligned}
\end{equation}
\end{itemize}

Since $M\le Y\log Y\le Y^2$, \eqref{maintermwithr} becomes
\begin{equation}\label{firstapp}
    \le 2.427\cdot 10^{11}\frac{C_3}{e^{2M/Y}}\frac{e^{\frac{1}{6\alpha}}R^2}{D_1}.
\end{equation}
Also,
\begin{equation}\label{secondapp}
   \frac{\log^5M}{e^{0.01\pi\log^{1.5}T}}\le \frac{\log^5(Y\log Y)}{e^{0.01\pi\log^{1.5}T}}\le \frac{(2\log Y)^5}{e^{0.01\pi\log^{1.5}T}}\le 10^{18},
\end{equation}
and hence \eqref{thirdterm} becomes
\[
\le R^2 \cdot  10^{-70}.
\]
\begin{remark}
    In order to estimate both \eqref{firstapp} and \eqref{secondapp}, we used the worst estimate for $\log Y$ among the four found above, since the contribution is negligible.
\end{remark}
Furthermore
\begin{equation}
\begin{aligned}
    &\frac{2C_3}{e^{2M/Y}}(2\pi)^{1/2}e^{1/6}M^{2-2\sigma}\log^5M\log^{1.4/2}T\exp\left(-\frac{\pi}{2}\log^{1.4}T\right)\\&\le 2(2\pi)^{1/2}e^{1/6}C_3 M^{0.04}(2\cdot 98.99\log T)^5\log^{1.4/2}T\exp\left(-\frac{\pi}{2}\log^{1.4}T\right)\\&\le  C_3 2.18\cdot 10^{12}\cdot T^{3.96}(\log T)^{5.74}\exp\left(-\frac{\pi}{2}\log^{1.4}T\right)\le 10^{-4} .
\end{aligned}
\end{equation}
\begin{remark}
    The choice $|t_r-t_s|\ge \log^{1.4} T$ is almost optimal, as otherwise even with $1.3$ the above quantity is not small enough.
\end{remark}
Hence, dividing by $R$ we get
\begin{equation}
    \begin{aligned}
        &R \le \frac{C_3}{e^{2M/Y}}M^{2-2\sigma}\log^5M+R\cdot 10^{-4}+2.427\cdot 10^{11}\cdot\frac{C_3e^{\frac{1}{6\alpha}}R}{D_1}+R\cdot 10^{-70},
    \end{aligned}
\end{equation}

or equivalently
\begin{equation}
    \begin{aligned}
    RC_2\le  \frac{C_3}{e^{2M/Y}}M^{2-2\sigma}\log^5M,
    \end{aligned}
\end{equation}
where\begin{align*}
    C_2&=\left(1- 10^{-4}-2.427\cdot 10^{11}\cdot\frac{C_3e^{\frac{1}{6\alpha}}}{D_1}- 10^{-70}\right)\\&\ge \left(1- 10^{-4}-2.427\cdot 10^{11}\cdot\frac{C_3e^{\frac{1}{6}}}{D_1}- 10^{-70}\right)\\&\ge\left\{\begin{array}{lll}
    0.7301 &  \text{if }3\cdot10^{12}< T\le e^{46.2},\\ \\
       0.7305 &   \text{if }e^{46.2}< T \le e^{170.2}, \\ \\
       0.7315 &   \text{if }e^{170.2}< T\le e^{481958}, \\ \\
       0.7351 &   \text{if }T> e^{481958}.
    \end{array}\right.
\end{align*}
Now, for $\sigma\ge (\alpha+1)/2$,
we have 
\begin{equation}
    \begin{aligned}
        &R_1\le\max_{X\le M=2^k\le Y\log Y}  C_4 \frac{M^{2-2\sigma}}{e^{2M/Y}}\log^5M\log Y\\&\le   C_4 Y^{2-2\sigma}(2\log Y)^5\log Y\frac{(1-\sigma)^{2-2\sigma}}{e^{2-2\sigma}}\\&\le 2^5\cdot C_4\cdot  Y^{2-2\sigma}\log^6Y,
    \end{aligned}
\end{equation}
where 
\[
C_4\le \frac{C_3c_0}{C_2\cdot C_1}\le \left\{\begin{array}{lll}
    1.9213 &  \text{if }3\cdot10^{12}< T \le e^{46.2},\\ \\
     1.9157   &   \text{if }e^{46.2}< T \le e^{170.2}, \\ \\
     1.9024 &   \text{if }e^{170.2}< T \le  e^{481958}, \\ \\
      1.8557 &   \text{if }T>  e^{481958}.
    \end{array}\right.,
\]
and we used the fact that the maximum of the function for the variable $M$
\[
\frac{M^{2-2\sigma}}{e^{2M/Y}}
\]is reached in $M=Y(1-\sigma)$. 
It follows that
\begin{equation}
    R_1\le C_5\cdot Y^{2-2\sigma}\log^6T,
\end{equation}
where, using the upper bounds found for $\log Y$ in the three different ranges, 
\[
C_5\le \left\{\begin{array}{lll}
    5.785\cdot 10^{13} &  \text{if }3\cdot10^{12}< T\le e^{46.2},\\ \\
    5.724\cdot 10^{13}    &   \text{if }e^{46.2}< T \le e^{170.2}, \\ \\
        1.842\cdot 10^{13} &   \text{if }e^{170.2}< T\le e^{481958}, \\ \\
       1.245\cdot 10^{12} &   \text{if }T>  e^{481958}.
    \end{array}\right.
\]
Finally,
\begin{equation}
\begin{aligned}
    &Y^{2-2\sigma}\le D_2^{7/6}70.6995^{7/6}(3T)^{57.8875(1-\sigma)^{3/2}}(\log(3T))^{7/9}(\log T)^{1661/600}.
    \end{aligned}
\end{equation}
It follows that
\begin{equation}\label{finalestimateforr1}
   R_1\le \mathcal{C}\cdot T^{57.8875(1-\sigma)^{3/2}}(\log T)^{17183/1800}, 
\end{equation}
where
\[
\mathcal{C}\le \left\{\begin{array}{lll}
     1.04\cdot 10^{24} &  \text{if }3\cdot10^{12}< T\le e^{46.2},\\ \\
      1.02\cdot 10^{24}   &   \text{if }e^{46.2}< T \le e^{170.2}, \\ \\
        3.22\cdot 10^{23} &   \text{if }e^{170.2}< T\le  e^{481958}, \\ \\
       2.17\cdot 10^{22} &   \text{if }T>  e^{481958}.
    \end{array}\right.
\]
\subsection{Estimate for $R_2$} Contrary to what we did for estimating $R_1$, for $R_2$ we will just work with the Korobov--Vinogradov zero-free region, which is asymptotically the widest one. Indeed, the contribution given by the quantity $R_2$ relies primarily on the power of the log-factor in the final result, which is not affected by the choice of the zero-free region we are working with. Indeed, the difference between the slightly sharper contribution from the optimal zero-free region for each $T$ and that one obtained with the Korobov--Vinogradov zero-free region for all $T$ is negligible. Hence, from now on we will just work with the zero-free region \eqref{kvzerofree}, which, as we already mentioned before, holds for every $|T|\ge 3$.\\
Given \begin{equation}
    \int_{-\log T}^{\log T}\zeta(\alpha+i\gamma_r+iv)M_{X}(\alpha+i\gamma_r+iv)\Gamma(\alpha-\beta+iv)Y^{\alpha-\beta+iv}\text{d}v\ge c_0,
\end{equation}
we denote with $t_r$ a real number such that $|t_r|\le 2T$, $|t_r-t_s|>\log^{1.4} T$ for $r\neq s$, and such that the quantity
\[
\left|\zeta(\alpha+it_r)M_{X}(\alpha+it_r)\right|
\]
is maximum. Furthermore, since $\beta\ge\sigma\ge \frac{1+\alpha}{2}$ and $\alpha=5\sigma-4$, we have, for every $T> 3\cdot 10^{12}$,
\begin{align*}
  \beta-\alpha\ge \sigma-5\sigma+4=4-4\sigma\ge
       \frac{4}{(53.989)(\log T)^{2 / 3}(\log \log T)^{1 / 3}}.
\end{align*}
In order to estimate the quantity
\begin{equation}\label{intgamma}
     \int_{-\log T}^{\log T}|\Gamma(\alpha-\beta+iv)|\text{d}v,
\end{equation}
we observe that the functional equation for the Gamma function $\Gamma(z+1)=z\Gamma(z)$ implies
\[
|\Gamma(\alpha-\beta+iv)|=\frac{1}{|\alpha-\beta+iv|}|\Gamma(\alpha-\beta+1+iv)| \le \frac{1}{|\alpha-\beta+iv|}|\Gamma(\alpha-\beta+1)|.
\]
Since by our hypotheses on $\alpha$ and $\beta$ we have $0<\alpha-\beta+1<1$, we can apply to $\Gamma(\alpha-\beta+1)$ the following relation for $\Gamma$ which holds for every real $0<z<1$, i.e.,
\begin{equation}\label{uppergamma}
\Gamma(z) = \frac{1}{z}\int_0^{\infty}x^ze^{-x}\text{d}x < \frac{1}{z}\int_0^{\infty}xe^{-x}\text{d}x = \frac{1}{z},  
\end{equation}
obtaining 
\begin{equation}\label{ineqgamma}
  |\Gamma(\alpha-\beta+iv)|\le  \frac{1}{|\alpha-\beta+iv||\alpha-\beta+1|}.  
\end{equation}
Now, we define the following quantity:
\[
\mathscr{A}:=\frac{4}{53.989(\log T)^{2/3}(\log(\log T))^{1/3}}.
\] 
The integral \eqref{intgamma} can be rewritten as
\begin{equation}
   \int_{-\log T}^{\log T}|\Gamma(\alpha-\beta+iv)|\text{d}v=2 \int_{0}^{\mathcal{A}}|\Gamma(\alpha-\beta+iv)|\text{d}v+2 \int_{\mathcal{A}}^{\log T}|\Gamma(\alpha-\beta+iv)|\text{d}v,
\end{equation}
since $\mathcal{A}\le \log T$ for every $T>3\cdot 10^{12}$. We estimate the two terms separately.\\
Since $|\alpha - \beta + iv| \ge \mathcal{A}$, it follows that
\begin{equation}\label{part1}
    2 \int_{0}^{\mathcal{A}}|\Gamma(\alpha-\beta+iv)|\text{d}v \le 2 \int_{0}^{\mathcal{A}}\frac{\text{d}v}{|\alpha-\beta+iv||\alpha-\beta+1|}\le \frac{2}{\mathcal{A}|\alpha-\beta+1|}\cdot \mathcal{A}\le 2.06,
\end{equation}
where we used the inequality \eqref{ineqgamma}.\\ However, when $|v| > \mathcal{A}$, we also have $|\alpha - \beta + iv| \ge |v|$. Hence, using \eqref{ineqgamma}, one gets
\begin{equation}\label{part2}
\begin{aligned}
     &2 \int_{0}^{\mathcal{A}}|\Gamma(\alpha-\beta+iv)|\text{d}v \le 2 \int_{0}^{\mathcal{A}}\frac{\text{d}v}{|\alpha-\beta+iv||\alpha-\beta+1|}\\&\le  \frac{1}{|\alpha - \beta + 1|}\int_{\mathcal{A}}^{\log T}\frac{\text{d}v}{v} \le 2.06\log(\log T).
\end{aligned}   
\end{equation}
By \eqref{part1} and \eqref{part2} it follows that 
\begin{align*}
    &\int_{-\log T}^{\log T}\left|\Gamma(\alpha-\beta+iv)\right|\text{d}v\le 2.7\log(\log T).
\end{align*}
Now,
\begin{align*}
&\int_{-\log T}^{\log T}\zeta(\alpha+i\gamma_r+iv)M_{X}(\alpha+i\gamma_r+iv)\Gamma(\alpha-\beta+iv)Y^{\alpha-\beta+iv}\text{d}v\\&\le 2.7\log(\log T) Y^{\alpha-\sigma}\mathcal{M}(\alpha,2T)M_X(\alpha+it_r).
\end{align*}
 Hence, one has
\[
M_{X}(\alpha+it_r)\ge \frac{Y^{\sigma-\alpha}c_0}{2.7\log(\log T)\mathcal{M}(\alpha,2T)}
\]
for $R_2$ points $t_r$ such that $|t_r|\le 2T$, $|t_r-t_s|>\log^{1.4} T$ for $r\neq s\le R_2$.\\
Then, there exists a number $N\in[1, X]$, such that
\begin{align*}
    &\sum_{N<n\le 2N}\mu(n)n^{-\alpha-it_r}\\&\ge \frac{c_0 Y^{\sigma-\alpha}}{2.7\log(\log T)\mathcal{M}(\alpha,2T)\log T}\frac{1}{\frac{1}{\log 2}-\frac{1}{\log X}}\\&\ge\frac{Y^{\sigma-\alpha}}{\mathcal{M}(\alpha,2T)\log(\log T)\log T}C_6,
\end{align*}
with \[
C_6=\frac{c_0}{2.7}\frac{1}{\frac{1}{\log 2}-\frac{1}{\log X}}\ge \frac{0.49999\log 2}{2.7}\ge 0.128,
\]
for $R_0\ge R_2/(d\log T)$ with $d=\frac{1}{\log 2}-\frac{1}{\log X}\le 1.45$ numbers $t_r$.\\As in the case for $R_1$, we apply Halász--Montgomery inequality in Theorem \ref{montineq} with
\[
\xi=\{\xi_n\}_{n=1}^\infty,\quad \xi_n=\mu(n)(e^{-n/2N}-e^{-n/N})^{-1/2}n^{-\alpha},\quad N<n\le 2N
\]
and $0$ otherwise. Also,
\[
\varphi_{r}=\{\varphi_{r,n}\}_{n=1}^{\infty},\quad \varphi_{r,n}=(e^{-n/2M}-e^{-n/M})^{1/2}n^{-it_r}.
\]We get
\begin{align*}
        &R_0\le \mathcal{M}^2(\alpha,2T)(\log\log T)^2(\log T)^2 Y^{2\alpha-2\sigma}\frac{1}{C^2_6}\left(\sum_{N<n\le 2N}\mu(n)^2e^{-2n/X}n^{-2\alpha}\right)\\&\ \times \left(R_0N+\sum_{r\neq s\le R_0}|H(it_r-it_s)|\right)\\&\le  \mathcal{M}^2(\alpha,2T)(\log\log T)^2(\log T)^2 Y^{2\alpha-2\sigma}\frac{1}{C^2_6}\left(\sum_{N<n\le 2N}\mu(n)^2e^{-2n/X}n^{-2\alpha}\right)\\&\left(R_0N+\sum_{r\neq s\le R_0}\left|((2N)^{1-i(t_r-t_s)}-N^{1-i(t_r-t_s)})\Gamma(1-i(t_r-t_s))\right|\right.\\&\left.+\sum_{r\neq s\le R_0}\left|\frac{1}{2\pi i}\int_{\alpha-i\infty}^{\alpha+i\infty}\zeta(w+i(t_r-t_s))((2N)^w-N^w)\Gamma(w)dw\right|\right),
        \end{align*}
        and, splitting the integral in two terms, the previous relation becomes
\begin{equation}\label{eqr2}
            \begin{aligned}
      &\le \mathcal{M}^2(\alpha,2T)(\log\log T)^2(\log T)^2 Y^{2\alpha-2\sigma}\frac{1}{C^2_6}\left(\sum_{N<n\le 2N}\mu(n)^2e^{-2n/X}n^{-2\alpha}\right)\\&\left(R_0N+N\sum_{r\neq s\le R_0}\left|N^{-i(t_r-t_s)}(2^{1-i(t_r-t_s)}-1)\Gamma(1-i(t_r-t_s))\right|\right.\\&+\frac{N^\alpha}{2\pi }\sum_{r\neq s\le R_0}\left|\int_{-\log^{2} T}^{\log^{2} T}\zeta(\alpha+it_r-it_s+iv)N^{iv}(2^{iv}-1)\Gamma(\alpha+iv)\text{d}v\right|\\&\left.+\frac{N^\alpha}{2\pi }\sum_{r\neq s\le R_0}\left|\int_{|v|\ge \log^{2} T}\zeta(\alpha+it_r-it_s+iv)N^{iv}(2^{iv}-1)\Gamma(\alpha+iv)\text{d}v\right|\right).
    \end{aligned}
\end{equation}
We want to estimate all the terms in the above inequality.\\ First of all, a trivial estimate gives
\[
\left(\sum_{N<n\le 2N}\mu(n)^2e^{-2n/X}n^{-2\alpha}\right)\le N^{1-2\alpha}e^{-2N/X}.
\]
Then, following exactly the same argument as for $R_1$, together with the upper bound for the Gamma function in Lemma \ref{stirling}, the previous inequality \eqref{eqr2} becomes
    \begin{align*}
        &\le \mathcal{M}^2(\alpha,2T)(\log\log T)^2(\log T)^2 Y^{2\alpha-2\sigma}\frac{1}{C^2_6} N^{1-2\alpha}e^{-2N/X}\\&\ \  \times \left(R_0N+2e^{\frac{1}{6}}NR_0^2 (2\pi)^{1/2}\log^{1.4/2}T\exp\left(-\frac{\pi}{2}\log^{1.4}T\right)\right.\\&\quad +\ \left.\frac{\sqrt{2}e^{\frac{1}{6\alpha}}R_0^2N^\alpha}{\sqrt{\pi} }\mathcal{M}(\alpha,3T)\right.\\&\left.\quad+\frac{N^{\alpha}}{\pi }\sum_{r\neq s\le R_0}\int_{|v|\ge \log^{2} T}|\zeta(\alpha+it_r-it_s+iv)||\Gamma(\alpha+iv)|\text{d}v\right)\\&\le \mathcal{M}^2(\alpha,2T)(\log\log T)^2(\log T)^2 Y^{2\alpha-2\sigma}\frac{1}{C^2_6} e^{-2N/X}\\&\ \ \times\left(R_0N^{2-2\alpha}+2R_0^2N^{2-2\alpha}e^{\frac{1}{6}}(2\pi)^{1/2}\log^{1.4/2}T\exp\left(-\frac{\pi}{2}\log^{1.4}T\right)\right.\\& \left.\quad +\frac{\sqrt{2}e^{\frac{1}{6\alpha}}R_0^2N^{1-\alpha}}{\sqrt{\pi} }\mathcal{M}(\alpha,3T)\right.\\&\left.\quad+\frac{N^{1-\alpha}}{\pi }\sum_{r\neq s\le R_0}\int_{|v|\ge \log^{2} T}|\zeta(\alpha+it_r-it_s+iv)||\Gamma(\alpha+iv)|\text{d}v\right).
    \end{align*}
In order to estimate the last term in the previous inequality, since the relation $|t_r-t_s+v|\le 5T\le 5e^{|v|^{1/2}}$ holds for $|v|\ge \log^2T$, we observe that
\begin{equation}\label{caser2small}
\begin{aligned}
    & \mathcal{M}^2(\alpha,2T)(\log\log T)^2(\log T)^2 \frac{Y^{2\alpha-2\sigma}N^{1-\alpha}}{C^2_6\pi e^{2N/X} }\\&\ \ \times \sum_{r\neq s\le R_0}\int_{|v|\ge \log^{2} T}|\zeta(\alpha+it_r-it_s+iv)||\Gamma(\alpha+iv)|\text{d}v\\&\le \frac{82.48e^{\frac{1}{6\log^{2} T}}}{C_6^2}\mathcal{M}^2(\alpha,2T)N^{1-\alpha} Y^{2\alpha-2\sigma}(\log\log T)^2(\log T)^2\\&\ \ \times\sum_{r\neq s\le R_0}\int_{|v|\ge \log^{2} T}  e^{4.43795
    (1-\alpha)^{3/2}|v|^{1/2}-\frac{\pi}{2}|v|}|v|^{\frac{1}{3}+\alpha-\frac{1}{2}}\text{d}v\\&\le \frac{164.96}{C_6^2}\frac{e^{\frac{1}{6\log^{2} T}}\mathcal{M}^2(\alpha,2T)N^{1-\alpha} Y^{2\alpha-2\sigma}(\log\log T)^2(\log T)^2}{e^{0.01 \pi\log^{2} T}}\\&\ \ \times\sum_{r\neq s\le R_0}\int_{v\ge \log^{2} T}  e^{4.43795
    (1-\alpha)^{3/2}v^{1/2}-0.49\pi v}v^{\alpha-\frac{1}{6}}\text{d}v\\&\le \frac{164.96}{C^2_6} \cdot 10^{-92}R_0^2\frac{e^{\frac{1}{6\log^{2} T}}\mathcal{M}^2(\alpha,2T)N^{1-\alpha} Y^{2\alpha-2\sigma}(\log\log T)^2(\log T)^2}{e^{0.01 \pi\log^{2} T}}.
\end{aligned}
\end{equation}
Using the definition of $X$ in \eqref{dfx}, since $X\le Y$, $N\le X$ and $\alpha-\sigma<0$, we have
\begin{align*}
    &Y^{2\alpha-2\sigma}N^{1-\alpha}\mathcal{M}^2(\alpha,2T)\log^2T\le X^{\alpha+1-2\sigma}\mathcal{M}^2(\alpha,3T)\log^2T\le \frac{\mathcal{M}(\alpha,3T)}{D_1(\log T)^3}.
\end{align*}
Hence, \eqref{caser2small} becomes
\begin{equation}\label{p1}
\begin{aligned}
    &\le \frac{164.96}{D_1C^2_6} \cdot 10^{-92}R_0^2\frac{e^{\frac{1}{6\log^{2} T}}\mathcal{M}(\alpha,3T)(\log\log T)^2}{e^{0.01 \pi\log^{2} T}\log^3 T}\\&\le  \frac{11663}{C_6^2D_1}\cdot 10^{-92}R_0^2\frac{e^{\frac{1}{6\log^{2} T}}(3T)^{4.43795(1-\alpha)^{3/2}}\log^{2/3}(3T)(\log\log T)^2}{e^{0.01 \pi\log^{2} T}\log^3 T}\\&\le R_0^2 10^{-80}.
\end{aligned}
\end{equation}
Furthermore, we observe that
\begin{equation}\label{estr2fin}
\begin{aligned}
    &\mathcal{M}^2(\alpha,2T)Y^{2\alpha-2\sigma} X^{2-2\alpha}\log^3T\le \mathcal{M}^2(\alpha,2T)(\mathcal{M}(\alpha,3T))^{-4/3}D_2^{-14/3}D_1^{10/3}(\log T)^{1289/150}\\&\le (\mathcal{M}(\alpha,3T))^{2/3}D_2^{-14/3}D_1^{10/3}(\log T)^{1289/150}\\&\le C_9T^{33.08(1-\sigma)^{3/2}}\log^{4067/450}T,
\end{aligned}\end{equation}
with $$C_9=1.017\cdot (70.6995)^{2/3}\cdot 3^{\frac{2}{3}\cdot 4.43795(1-\alpha)^{3/2}}\cdot D_2^{-14/3}D_1^{10/3}\le 1.92\cdot 10^9.$$
Hence, 
\begin{equation}\label{p2}
\begin{aligned}
    &\frac{2}{C^2_6} e^{-2N/X}\mathcal{M}^2(\alpha,2T)Y^{2\alpha-2\sigma}R_0^2N^{2-2\alpha}e^{\frac{1}{6}}(2\pi)^{1/2}(\log\log T)^2\log^{2+1.4/2}T\exp\left(-\frac{\pi}{2}\log^{1.4}T\right)\\&\le \frac{2}{C^2_6} \mathcal{M}^2(\alpha,2T)Y^{2\alpha-2\sigma}X^{2-2\alpha}R_0^2e^{\frac{1}{6}}(2\pi)^{1/2}(\log\log T)^2\log^{2.7}T\exp\left(-\frac{\pi}{2}\log^{1.4}T\right)\\&\le R_0^210^{-30}.
\end{aligned}
\end{equation}
Finally, by definition of $X,Y$ \eqref{dfx}, \eqref{dfy}, we have
\begin{equation}
    \begin{aligned}
       & Y^{2\alpha-2\sigma}X^{1-\alpha}\mathcal{M}^3(\alpha,3T)\log^2T(\log\log T)^2\le Y^{2\alpha-2\sigma}X^{1-\alpha}\mathcal{M}^3(\alpha,3T)(\log T)^{2.74}\\& =D_1^{(1-\alpha)/(2\sigma-1-\alpha)}D_2^{-2(3\sigma-2\alpha-1)/(2\sigma-1-\alpha)}=D_1^{5/3}D_2^{-14/3},
    \end{aligned}
\end{equation}
where we used the inequality $\log\log T\le (\log T)^{0.37}$, which holds for $T>3\cdot 10^{12}$.
It follows that, being $N\le X$,
\begin{equation}\label{p3}
    \begin{aligned}
        &\frac{\sqrt{2}}{\sqrt{\pi}C^2_6} e^{\frac{1}{6\alpha}}R^2_0e^{-2N/X}Y^{2\alpha-2\sigma} N^{1-\alpha}\mathcal{M}^3(\alpha,3T)\log^2T(\log\log T)^2\\&\le \frac{\sqrt{2}}{\sqrt{\pi}C^2_6} e^{\frac{1}{6\alpha}}R^2_0 Y^{2\alpha-2\sigma} X^{1-\alpha}\mathcal{M}^3(\alpha,3T)(\log T)^{2.74}\\&\le \frac{\sqrt{2}e^{\frac{1}{6\alpha}}R^2_0}{\sqrt{\pi}C^2_6} D_1^{5/3}D_2^{-14/3}\\&\le R_0^2\cdot 10^{-10}.
    \end{aligned}
\end{equation}
Using the estimates \eqref{p1},\eqref{p2} and \eqref{p3} and dividing by $R_0$ we get
\begin{equation}\label{finalr0}
    \begin{aligned}
        &R_0\le \frac{1}{C^2_6} e^{-2N/X}\mathcal{M}^2(\alpha,2T)Y^{2\alpha-2\sigma} X^{2-2\alpha}(\log T)^2(\log\log T)^2+R_0( 10^{-30}+  10^{-10}+ 10^{-80}),
    \end{aligned}
\end{equation}
or equivalently,
\[
C_7R_0\le \frac{1}{C_6^2}e^{-2N/X}\mathcal{M}^2(\alpha,2T)Y^{2\alpha-2\sigma} X^{2-2\alpha}(\log T)^2(\log\log T)^2,
\]
where
\[
C_7=1-10^{-30}-  10^{-10}- 10^{-80}\ge 0.9999.
\]
It follows that
\[
R_0\le C_8 e^{-2N/X}\mathcal{M}^2(\alpha,2T)Y^{2\alpha-2\sigma} X^{2-2\alpha}(\log T)^2(\log\log T)^2,
\]
with 
\[
C_8=\frac{1}{C_7C_6^2}\le 61.05.
\]
Hence, for $\sigma\ge (\alpha+1)/2$, using \eqref{estr2fin} we have 
\begin{equation}
    \begin{aligned}
        R_2&\le\max_{1\le N=2^k\le X} d\cdot  C_8 e^{-2N/X}\mathcal{M}^2(\alpha,2T)Y^{2\alpha-2\sigma} X^{2-2\alpha}(\log T)^3(\log\log T)^2\\&\le d\cdot  C_8 \mathcal{M}^2(\alpha,2T)Y^{2\alpha-2\sigma} X^{2-2\alpha}(\log T)^3(\log\log T)^2\\&\le d\cdot  C_8 C_9T^{33.08(1-\sigma)^{3/2}}(\log T)^{4067/450}(\log T)^{0.74}\\&\le C_{10}T^{33.08(1-\sigma)^{3/2}}\log^{88/9}T,
    \end{aligned}
\end{equation}
where 
\[
C_{10}=dC_8C_9\le 1.7\cdot 10^{11}.
\]
We can conclude that
\begin{equation}\label{finalestimateforr2}
    R_2\le C_{10}T^{33.08(1-\sigma)^{3/2}}\log^{88/9}T.
\end{equation}
\subsection{Conclusion}
From \eqref{finalzerod}, we recall that
\begin{equation}
    N(\sigma,T)\le \left(R_1+R_2+1\right)0.45\log ^{1.4}T\log\log T.
\end{equation}
Inserting the estimates found for both $R_1$ and $R_2$, i.e. \eqref{finalestimateforr1} and \eqref{finalestimateforr2} respectively, we finally get
\begin{equation}\label{finalineq}
    \begin{aligned}
         &N(\sigma,T)\le \left(R_1+R_2+1\right)0.45\log ^{1.4}T\log\log T\\&\le \left(\mathcal{C} T^{57.8875(1-\sigma)^{3/2}}(\log T)^{17183/1800}+C_{10}T^{33.08(1-\sigma)^{3/2}}(\log T)^{88/9}+1\right)0.45(\log T)^{1.4}\log\log T\\&\le \mathcal{C}_1T^{57.8875(1-\sigma)^{3/2}}(\log T)^{19703/1800}\log\log T+\mathcal{C}_2T^{33.08(1-\sigma)^{3/2}}(\log T)^{503/45}\log\log T\\&\ \ +0.27(\log T)^{14/10}\log\log T,
    \end{aligned}
\end{equation}
where  $\mathcal{C}_2=7.65\cdot 10^{10}$ and
\begin{equation}
    \mathcal{C}_1=\left\{\begin{array}{lll}
    4.68\cdot 10^{23} &  \text{if }3\cdot10^{12}\le T\le e^{46.2},\\ \\
     4.59\cdot 10^{23}   &   \text{if }e^{46.2}< T \le e^{170.2}, \\ \\
        1.45\cdot 10^{23} &   \text{if }e^{170.2}< T\le e^{481958}, \\ \\
        9.77\cdot 10^{21} &   \text{if }T>  e^{481958}.
    \end{array}\right.
\end{equation}
This concludes the proof of Theorem \ref{theorem1general}.
\section{Proof of Theorem \ref{theorem1}}
Using the estimate in Theorem \ref{theorem1general} and the maximum between the exponents $19703/1800$ and $503/45$ of the log-factors, we have
\begin{equation}\label{simple}
    \begin{aligned}
          &N(\sigma,T)\le  \mathcal{C}^{\prime}_1 T^{57.8875(1-\sigma)^{3/2}}(\log T)^{\frac{503}{45}}\log\log T,
    \end{aligned}
\end{equation}
where
\begin{equation}\label{c1}
    \mathcal{C}^{\prime}_1=\left\{\begin{array}{lll}
   2.15\cdot 10^{23} &  \text{if }3\cdot10^{12}< T\le e^{46.2},\\ \\
    1.89\cdot 10^{23}    &   \text{if }e^{46.2}< T \le e^{170.2}, \\ \\
        4.42\cdot 10^{22} &   \text{if }e^{170.2}< T\le e^{481958}, \\ \\
        4.72\cdot 10^{20} &   \text{if } T>  e^{481958}.
    \end{array}\right.
\end{equation}
Finally, for $T\ge 3\cdot 10^{12}$, we recall that $\log\log T\le (\log T)^{0.37}$. Hence, \eqref{simple} becomes

\begin{equation}
    N(\sigma,T)\le \mathcal{C}^{\prime}_1T^{57.8875(1-\sigma)^{3/2}}(\log T)^{10393/900},
\end{equation}
where $\mathcal{C}_1^\prime$ is defined in \eqref{c1}.\\This concludes the proof of Theorem \ref{theorem1}.
\section*{Acknowledgements}
I would like to thank my supervisor Timothy S. Trudgian for his support and helpful suggestions throughout the writing of this article.
\clearpage
\printbibliography

@article{ford_vinogradovs_2002,
	title = {Vinogradov's integral and bounds for the Riemann zeta function},
	volume = {85},
	number = {3},
	urldate = {2023-01-17},
	journal = {Proceedings of the London Mathematical Society},
	author = {Ford, Kevin},
	month = nov,
	year = {2002},
	pages = {565--633},
}

@article{heathbrown_new_2017,
	title = {A new kth derivative estimate for exponential sums via {Vinogradov}’s mean value},
	volume = {296},
	number = {1},
	journal = {Proceedings of the Steklov Institute of Mathematics},
	author = {Heath-Brown, D. R.},
	month = jan,
	year = {2017},
	pages = {88--103},
}

@article{ford_zero_2022,
    author = {Ford, K.},
    title = {Zero-free regions for the {R}iemann zeta function},
    note = {Preprint available at arXiv:1910.08205},
    year = {2022}
}

@article {platt_riemann_2021,
    AUTHOR = {Platt, D. J. and Trudgian, T. S.},
     TITLE = {The {R}iemann hypothesis is true up to {$3\cdot 10^{12}$}},
   JOURNAL = {Bull. Lond. Math. Soc.},
  FJOURNAL = {Bulletin of the London Mathematical Society},
    VOLUME = {53},
      YEAR = {2021},
    NUMBER = {3},
     PAGES = {792--797},
}

@article{mossinghoff_explicit_2022,
    author = {Mossinghoff, Mike J. and Trudgian, Timothy S. and Yang, Andrew},
    month = dec,
    year = 2022,
    title = {Explicit zero-free regions for the Riemann zeta-function},
note={arXiv:2212.06867}
}

@article{yang2023explicit,
      title={Explicit bounds on $\zeta(s)$ in the critical strip and a zero-free region}, 
      author={Andrew Yang},
month=jan,
      year=2023,
note={arXiv:2301.03165}
}

@article{richert_zur_1967,
	title = {Zur {Absch}\"atzung der {Riemannschen} {Zetafunktion} in der {N}\"ahe der {Vertikalen} $\sigma = 1$},
	volume = {169},
	language = {German},
	number = {1},
	journal = {Mathematische Annalen},
	author = {Richert, H. -E.},
	month = mar,
	year = {1967},
	pages = {97--101},
}

@article{HASANALIZADE2022219,
title = {Counting zeros of the Riemann zeta function},
journal = {Journal of Number Theory},
volume = {235},
pages = {219-241},
year = {2022},
author = {Elchin Hasanalizade and Quanli Shen and Peng-Jie Wong},
}

@article{CullyHugill2019TwoED,
  title={Two explicit divisor sums},
  author={Michaela Cully-Hugill and Tim Trudgian},
  journal={The Ramanujan Journal},
  year={2019},
  volume={56},
  pages={141 - 149}
}

@book{Ivic2003TheRZ,
  title={The Riemann Zeta-Function : Theory and Applications},
  author={Aleksandar Ivic},
  year={2003},
series={Dover Books on Mathematics Series},
publisher={	Dover Publications, Incorporated}
}

@article{bellotti2023explicit,
      title={Explicit bounds for the Riemann zeta function and a new zero-free region}, 
      author={Chiara Bellotti},
month=jun,
      year=2023,
     note={arXiv:2306.10680},
}

@article{ramare2016explicit,
  title={An explicit density estimate for Dirichlet L-series},
  author={Ramar{\'e}, Olivier},
  journal={Mathematics of Computation},
  volume={85},
  number={297},
  pages={325--356},
  year={2016}
}

@article{KADIRI201822,
title = {Explicit zero density for the Riemann zeta function},
journal = {Journal of Mathematical Analysis and Applications},
volume = {465},
number = {1},
pages = {22-46},
year = {2018},
author = {Habiba Kadiri and Allysa Lumley and Nathan Ng},

}

@article{ingham1937difference,
  title={On the difference between consecutive primes},
  author={Ingham, Albert Edward},
  journal={The Quarterly Journal of Mathematics},
  number={1},
  pages={255--266},
  year={1937},
  publisher={Oxford University Press}
}

@book{montgomery_topics_1971,
	address = {Berlin, Heidelberg},
	series = {Lecture {Notes} in {Mathematics}},
	title = {Topics in {Multiplicative} {Number} {Theory}},
	volume = {227},
	isbn = {9783540056416 9783540369356},
	url = {http://link.springer.com/10.1007/BFb0060851},
	urldate = {2023-10-08},
	publisher = {Springer Berlin Heidelberg},
	author = {Montgomery, Hugh L.},
	year = {1971},
	doi = {10.1007/BFb0060851},
}

@article{trudgian2023optimal,
      title={On optimal exponent pairs}, 
      author={Timothy S. Trudgian and Andrew Yang},
      month= jun,
      year=2023,
      note={arXiv:2306.05599},
}

@article{pintz2023remark,
      title={A remark on density theorems for Riemann's zeta-function}, 
      author={Janos Pintz},
      year={2023},
      month=sep,
      note={arXiv:2310.04544}
}

@book{olver_asymptotics_1974,
title = {Asymptotics and Special Functions},
	publisher = {Academic Press, New York},
	author = {Olver, Frank W. J.},
	year = {1974},
}

@article{hiary2022improved,
      title={An improved explicit estimate for $\zeta(1/2+it)$}, 
      author={Ghaith A. Hiary and Dhir Patel and Andrew Yang},
      year={2022},
month=jul,
      note={arXiv:2207.02366},
}
\end{document}